\title{Inversive Meadows and Divisive Meadows}
\author{J.A. Bergstra \and C.A. Middelburg}
\institute{Informatics Institute, Faculty of Science,
           University of Amsterdam, \\
           Science Park~904, 1098~XH Amsterdam, the Netherlands \\
           \email{J.A.Bergstra@uva.nl,C.A.Middelburg@uva.nl}}
\begin{document}

\maketitle

\begin{abstract}
Inversive meadows are commutative rings with a multiplicative identity
element and a total multiplicative inverse operation satisfying
$0\minv = 0$.
Divisive meadows are inversive meadows with the multiplicative inverse
operation replaced by a division operation.
We give finite equational specifications of the class of all inversive
meadows and the class of all divisive meadows.
It depends on the angle from which they are viewed whether inversive
meadows or divisive meadows must be considered more basic.
We show that inversive and divisive meadows of rational numbers can be
obtained as initial algebras of finite equational specifications.
In the spirit of Peacock's arithmetical algebra, we study variants of
inversive and divisive meadows without an additive identity element
and/or an additive inverse operation.
We propose simple constructions of variants of inversive and divisive
meadows with a partial multiplicative inverse or division operation from
inversive and divisive meadows.
Divisive meadows are more basic if these variants are considered as
well.
We give a simple account of how mathematicians deal with $1 \mdiv 0$, in
which meadows and a customary convention among mathematicians play
prominent parts, and we make plausible that a convincing account,
starting from the popular computer science viewpoint that $1 \mdiv 0$ is
undefined, by means of some logic of partial functions is not
attainable.
\begin{keywords}
inversive meadow, divisive meadow, arithmetical meadow,
partial meadow, imperative meadow, relevant division convention.
\end{keywords}%
\end{abstract}

\section{Introduction}
\label{sect-introduction}

The primary mathematical structure for measurement and computation is
unquestionably a field.
In~\cite{BT07a}, meadows are proposed as alternatives for fields with a
purely equational axiomatization.
A meadow is a commutative ring with a multiplicative identity element
and a total multiplicative inverse operation satisfying two equations
which imply that the multiplicative inverse of zero is zero.
Thus, meadows are total algebras.
Recently, we found in~\cite{Ono83a} that meadows were already introduced
by Komori~\cite{Kom75a} in a report from 1975, where they go by the name
of \emph{desirable pseudo-fields}.
This finding induced us to propose the name \emph{Komori field} for a
meadow satisfying $0 \neq 1$ and $x \neq 0 \Implies x \mmul x\minv = 1$.
The prime example of Komori fields is the field of rational numbers with
the multiplicative inverse operation made total by imposing that the
multiplicative inverse of zero is zero.

As usual in field theory, the convention to consider $p \mdiv q$ as an
abbreviation for $p \mmul (q\minv)$ was used in subsequent work on
meadows (see e.g.~\cite{BHT09a,BP08a}).
This convention is no longer satisfactory if partial variants of meadows
are considered too, as will be demonstrated in this paper.
That is why we rename meadows into inversive meadows and introduce
divisive meadows.
A divisive meadow is an inversive meadow with the multiplicative inverse
operation replaced by the division operation suggested by the
above-mentioned abbreviation convention.
We give finite equational specifications of the class of all inversive
meadows and the class of all divisive meadows and demonstrate that it
depends on the angle from which they are viewed whether inversive
meadows or divisive meadows must be considered more basic.
Henceforth, we will use the name meadow whenever the distinction between
inversive meadows and divisive meadows is not important.

Peacock introduced in~\cite{Pea1830a} arithmetical algebra as algebra of
numbers where an additive identity element and an additive inverse
operation are not involved.
That is, arithmetical algebra is algebra of positive numbers instead of
algebra of numbers in general (see also~\cite{Kle98a}).
In the spirit of Peacock, we use the name \emph{arithmetical meadow} for
a meadow without an additive identity element and an additive inverse
operation.
Moreover, we use the name \emph{arithmetical meadow with zero} for a
meadow without an additive inverse operation, but with an additive
identity element.
Arithmetical meadows of rational numbers are reminiscent of Peacock's
arithmetical algebra.
We give finite equational specifications of the class of all inversive
arithmetical meadows, the class of all divisive arithmetical meadows,
the class of all inversive arithmetical meadows with zero and the class
of all divisive arithmetical meadows with zero.

The main inversive meadow that we are interested in is the
zero-totalized field of rational numbers, which differs from the field
of rational numbers only in that the multiplicative inverse of zero is
zero.
The main divisive meadow that we are interested in is the zero-totalized
field of rational numbers with the multiplicative inverse operation
replaced by the division operation suggested by the abbreviation
$p \mdiv q$ for $p \mmul (q\minv)$.
We show that these meadows can be obtained as initial algebras of finite
equational specifications.
We also show that arithmeti\-cal meadows of rational numbers and
arithmetical meadows of rational numbers with zero can be obtained as
initial algebras of finite equational specifications.
Arithmetical meadows of rational numbers and arithmetical meadows of
rational numbers with zero provide additional insight in what is yielded
by the presence of an operator for multiplicative inverse (or division)
in a signature.

Partial variants of meadows can be obtained by turning the total
multiplicative inverse or division operation into a partial one.
There is one way in which the total multiplicative inverse operation can
be turned into a partial one, whereas there are two conceivable ways in
which the total division operation can be turned into a partial one.
Therefore, we propose one construction of variants of inversive meadows
with a partial multiplicative inverse operation from inversive meadows
and two constructions of variants of divisive meadows with a partial
division operation from divisive meadows.
We demonstrate that divisive meadows are more basic if those partial
variants of meadows are considered as~well.

We can obtain interesting partial versions of the above-mentioned
meadows of rational numbers, each of which is the initial algebra of a
finite equational specification, by means of the proposed constructions
of partial versions.
This approach fits in with our position that partial algebras should be
made of total ones.
Thus, we can obtain total and partial algebras requiring only equational
logic for total algebras as a tool for their construction.

It is quite usual that neither the division operator nor the
multiplicative inverse operator is included in the signature of number
systems such as the field of rational numbers and the field of real
numbers.
However, the abundant use of the division operator in mathematical
practice makes it very reasonable to include the division operator, or
alternatively the multiplicative inverse operator, in the signature.
It appears that excluding both of them creates more difficulties than
that it solves.
At the least, the problem of division by zero cannot be avoided by
excluding $1 \mdiv 0$ from being written.
We give a simple account of how mathematicians deal with $1 \mdiv 0$ in
mathematical works.
Dominating in this account is the concept of an imperative meadow,
a concept in which a customary convention among mathematicians plays a
prominent part.
We also make plausible that a convincing account, starting from the
usual viewpoint of theoretical computer scientists that $1 \mdiv 0$ is
undefined, by means of some logic of partial functions is not
attainable.

This paper is organized as follows.
First, we go into the background of the work presented in this paper
with the intention to clarify and motivate this work
(Section~\ref{sect-background}) and discuss the main prevailing
viewpoints on the status of $1 \mdiv 0$ in mathematics and theoretical
computer science (Section~\ref{sect-viewpoints-div-by-zero}).
Next, we give equational specifications of the class of all
inversive meadows and the class of all divisive meadows
(Section~\ref{sect-inv-div-meadows}).
After that, we give equational specifications of the arithmetical
variants of those classes (Section~\ref{sect-arith-meadows}) and connect
one of those variants with an arithmetical version of von~Neumann
regular rings (Section~\ref{sect-relation-arith-rings}).
Then, we give equational specifications whose initial algebras are
inversive and divisive meadows of rational numbers
(Section~\ref{sect-meadows-rat}).
After that, we give equational specifications whose initial algebras are
the arithmetical variants of those meadows of rational numbers
(Section~\ref{sect-arith-meadows-rat}).
Following this, we introduce and discuss constructions of partial
variants of meadows from total ones (Section~\ref{sect-partial-meadows})
and constructions of partial variants of arithmetical meadows from total
ones (Section~\ref{sect-partial-arith-meadows}).
Next, we introduce imperative meadows of rational numbers
(Section~\ref{sect-imperative-meadows}) and discuss the convention that
is involved in them (Section~\ref{sect-rel-div-conv}).
After that, we make plausible the inadequacy of logics of partial
functions for a convincing account of how mathematicians deal with
$1 \mdiv 0$ (Section~\ref{sect-inadequacy-LPF}).
Finally, we make some concluding remarks
(Section~\ref{sect-conclusions}).

This paper consolidates material from~\cite{BM09g,BM09h,BM09j}.

\section{Background on the Theory of Meadows}
\label{sect-background}

In this section, we go into the background of the work presented in this
paper with the intention to clarify and motivate this work.

The theory of meadows, see e.g.~\cite{BHT09a,BP08a}, constitutes a
hybrid between the theory of abstract data type and the theory of rings
and fields, more specifically the theory of von Neumann regular
rings~\cite{McC64a,Goo79a} (all fields are von Neumann regular rings).

It is easy to see that each meadow can be reduced to a commutative von
Neumann regular ring with a multiplicative identity element.
Moreover, we know from~\cite{BHT09a} that each commutative von Neumann
regular ring with a multiplicative identity element can be expanded to a
meadow, and that this expansion is unique.
It is easy to show that, if $\morph{\phi}{X}{Y}$ is an epimorphism
between commutative rings with a multiplicative identity element and $X$
is a commutative von Neumann regular ring with a multiplicative
identity element, than:
 (i)~$Y$ is a commutative von Neumann regular ring with a multiplicative
identity element;
(ii)~$\phi$ is also an epimorphism between meadows for the meadows $X'$
and $Y'$ found by means of the unique expansions for $X$ and $Y$,
respectively.

However, there is a difference between commutative von Neumann regular
rings with a multiplicative identity element and meadows: the class of
all meadows is a variety and the class of all commutative von Neumann
regular rings with a multiplicative identity element is not.
In particular, the class of commutative von Neumann regular rings with a
multiplicative identity element is not closed under taking subalgebras
(a property shared by all varieties).
Let $\Rat$ be the ring of rational numbers, and let $\Int$ be its
subalgebra of integers.
Then $\Rat$ is a field and for that reason a commutative von Neumann
regular ring with a multiplicative identity element, but its subalgebra
$\Int$ is not a commutative von Neumann regular ring with a
multiplicative identity element.

In spite of the fact that meadows and commutative von Neumann regular
rings with a multiplicative identity element are so close that no new
mathematics can be expected, there is a difference which matters very
much from the perspective of abstract data type specification.
$\Rat$, the ring of rational numbers, is not a minimal algebra, whereas
$\Ratzi$, the inversive meadow of rational numbers is a minimal algebra.
As such, $\Ratzi$ is amenable to initial algebra specification.
The first initial algebra specification of $\Ratzi$ is given
in~\cite{BT07a} and an improvement due to Hirshfeld is given in the
current paper.
When looking for an initial algebra specification of $\Rat$, adding a
total multiplicative inverse operation satisfying $0\minv = 0$ as an
auxiliary function is the most reasonable solution, assuming that a
proper constructor as an auxiliary function is acceptable.

We see a theory of meadows having two roles:
 (i)~a starting-point of a theory of mathematical data types;
(ii)~an intermediate between algebra and logic.

On investigation of mathematical data types, known countable
mathematical structures will be equipped with operations to obtain
minimal algebras and specification properties of those minimal algebras
will be investigated.%
If countable minimal algebras can be classified as either computable,
semi-computable or co-semi-computable, known specification techniques
may be applied (see~\cite{BT95a} for a survey of this matter).
Otherwise data type specification in its original forms cannot be
applied.
Further, one may study $\omega$-completeness of specifications and term
rewriting system related properties.%

It is not a common viewpoint in algebra or in mathematics at large that
giving a name to an operation, which is included in a signature, is a
very significant step by itself.
However, the answer to the notorious question ``what is $1 \mdiv 0$'' is
very sensitive to exactly this matter.
Von Neumann regular rings provide a classical mathematical perspective
on rings and fields, where multiplicative inverse (or division) is only
used when its use is clearly justified and puzzling uses are rejected as
a matter of principle.
Meadows provide a more logical perspective to von Neumann regular rings
in which justified and unjustified use of multiplicative inverse cannot
be easily distinguished beforehand.

\section{Viewpoints on the Status of $1 \mdiv 0$}
\label{sect-viewpoints-div-by-zero}

In this section, we shortly discuss two prevailing viewpoints on the
status of $1 \mdiv 0$ in mathematics and one prevailing viewpoint on
the status of $1 \mdiv 0$ in theoretical computer science.
To our knowledge, the viewpoints in question are the main prevailing
viewpoints.
We take the case of the rational numbers, the case of the real numbers
being essentially the same.

One prevailing viewpoint in mathematics is that $1 \mdiv 0$ has no
meaning because $1$ cannot be divided by $0$.
The argumentation for this viewpoint rests on the fact that there is no
rational number $z$ such that $0 \mmul z = 1$.
Moreover, in mathematics, syntax is not prior to semantics and posing
the question ``what is $1 \mdiv 0$'' is not justified by the mere
existence of $1 \mdiv 0$ as a syntactic object.
Given the fact that there is no rational number that mathematicians
intend to denote by $1 \mdiv 0$, this means that there is no need to
assign a meaning to $1 \mdiv 0$.

Another prevailing viewpoint in mathematics is that the use of
$1 \mdiv 0$ is simply disallowed because the intention to divide $1$ by
$0$ is non-existent in mathematical practice.
This viewpoint can be regarded as a liberal form of the previous one:
the rejection of the possibility that $1 \mdiv 0$ has a meaning is
circumvented by disallowing the use of $1 \mdiv 0$.
Admitting that $1 \mdiv 0$ has a meaning, such as $0$ or ``undefined'',
is consistent with this viewpoint.

The prevailing viewpoint in theoretical computer science is that the
meaning of $1 \mdiv 0$ is ``undefined'' because division is a partial
function.
Division is identified as a partial function because there is no
rational number $z$ such that $0 \mmul z = 1$.
This viewpoint presupposes that the use of $1 \mdiv 0$ should be
allowed, for otherwise assigning a meaning to $1 \mdiv 0$ does not make
sense.
Although this viewpoint is more liberal than the previous one, it is
remote from ordinary mathematical practice.

The first of the two prevailing viewpoints in mathematics discussed
above only leaves room for very informal concepts of expression,
calculation, proof, substitution, etc.
For that reason, we refrain from considering that viewpoint any further
in the current paper.
The prevailing viewpoint in mathematics considered further in this paper
corresponds to the inversive and divisive meadows of rational numbers
together with an imperative about the use of the multiplicative inverse
operator and division operator, respectively.
The prevailing viewpoint in theoretical computer science corresponds to
two of the partial meadows of rational numbers obtained from the
inversive and divisive meadows of rational numbers by constructions
proposed in the current paper.

\section{Inversive Meadows and Divisive Meadows}
\label{sect-inv-div-meadows}

In this section, we give finite equational specifications of the class
of all inversive meadows and the class of all divisive meadows.
In~\cite{BT07a}, inversive meadows were introduced for the first time.
They are further investigated in e.g.~\cite{BHT09a,BP08a,BR08a,BRS09a}.

It appears that, in the sphere of groups, rings and fields, the
qualifications inversive and divisive have only been used by
Yamada~\cite{Yam63a} and Verloren van Themaat~\cite{Ver78a},
respectively.
Our use of these qualifications is in line with theirs.

An inversive meadow is a commutative ring with a multiplicative identity
element and a total multiplicative inverse operation satisfying two
equations which imply that the multiplicative inverse of zero is zero.
A divisive meadow is a commutative ring with a multiplicative identity
element and a total division operation satisfying three equations which
imply that division by zero always yields zero.
Hence, the signature of both inversive and divisive meadows include the
signature of a commutative ring with a multiplicative identity element.

The signature of commutative rings with a multiplicative identity
element consists of the following constants and operators:
\begin{itemize}
\item
the \emph{additive identity} constant $0$;
\item
the \emph{multiplicative identity} constant $1$;
\item
the binary \emph{addition} operator ${} + {}$;
\item
the binary \emph{multiplication} operator ${} \mmul {}$;
\item
the unary \emph{additive inverse} operator $- {}$;
\end{itemize}
The signature of inversive meadows consists of the constants and
operators from the signature of commutative rings with a multiplicative
identity element and in addition:
\begin{itemize}
\item
the unary \emph{multiplicative inverse} operator ${}\minv$.
\end{itemize}
The signature of divisive meadows consists of the constants and
operators from the signature of commutative rings with a multiplicative
identity element and in addition:
\begin{itemize}
\item
the binary \emph{division} operator ${} \mdiv {}$.
\end{itemize}
We write:
\begin{ldispl}
\begin{array}{@{}l@{\;}c@{\;}l@{}}
\sigcr    & \mathrm{for} & \set{0,1,{} + {},{} \mmul {}, - {}}\;,
\\
\sigimd   & \mathrm{for} & \sigcr \union \set{{}\minv}\;,
\\
\sigdmd   & \mathrm{for} & \sigcr \union \set{{} \mdiv {}}\;.
\end{array}
\end{ldispl}

We assume that there are infinitely many variables, including $x$, $y$
and $z$.
Terms are build as usual.
We use infix notation for the binary operators, prefix notation for the
unary operator $- {}$, and postfix notation for the unary operator
${}\minv$.
We use the usual precedence convention to reduce the need for
parentheses.
We introduce subtraction as an abbreviation: $p - q$ abbreviates
$p + (-q)$.
We denote the numerals $0$, $1$, $1 + 1$, $(1 + 1) + 1$, \ldots~by
$\ul{0}$, $\ul{1}$, $\ul{2}$, $\ul{3}$, \ldots~and we use the notation
$p^n$ for exponentiation with a natural number as exponent.
Formally, we define $\ul{n}$ inductively by $\ul{0} = 0$, $\ul{1} = 1$
and $\ul{n + 2} = \ul{n} + 1$ and we define, for each term $p$ over the
signature of inversive meadows or the signature of divisive meadows,
$p^n$ inductively by $p^0 = 1$ and $p^{n+1} = p^n \mmul p$.

The constants and operators from the signatures of inversive meadows and
divisive meadows are adopted from rational arithmetic, which gives an
appropriate intuition about these constants and operators.
The set of all terms over the signature of inversive meadows constitutes
the \emph{inversive meadow notation} and
the set of all terms over the signature of divisive meadows constitutes
the \emph{divisive meadow notation}.

A commutative ring with a multiplicative identity element is an algebra
over the signature $\sigcr$ that satisfies the equations given in
Table~\ref{eqns-commutative-ring}.
\begin{table}[!t]
\caption
{Axioms of a commutative ring with a multiplicative identity element}
\label{eqns-commutative-ring}
\begin{eqntbl}
\begin{eqncol}
(x + y) + z = x + (y + z)                                             \\
x + y = y + x                                                         \\
x + 0 = x                                                             \\
x + (-x) = 0
\end{eqncol}
\qquad\quad
\begin{eqncol}
(x \mmul y) \mmul z = x \mmul (y \mmul z)                             \\
x \mmul y = y \mmul x                                                 \\
x \mmul 1 = x                                                         \\
x \mmul (y + z) = x \mmul y + x \mmul z
\end{eqncol}
\end{eqntbl}
\end{table}
An inversive meadow is an algebra over the signature $\sigimd$ that
satisfies the equations given in Tables~\ref{eqns-commutative-ring}
and~\ref{eqns-add-inversive-meadow}.%
\begin{table}[!t]
\caption{Additional axioms for an inversive meadow}
\label{eqns-add-inversive-meadow}
\begin{eqntbl}
\begin{eqncol}
{(x\minv)}\minv = x                                                   \\
x \mmul (x \mmul x\minv) = x
\end{eqncol}
\end{eqntbl}
\end{table}
A divisive meadow is an algebra over the signature $\sigdmd$ that
satisfies the equations given in Tables~\ref{eqns-commutative-ring}
and~\ref{eqns-add-divisive-meadow}.%
\begin{table}[!t]
\caption{Additional axioms for a divisive meadow}
\label{eqns-add-divisive-meadow}
\begin{eqntbl}
\begin{eqncol}
1 \mdiv (1 \mdiv x) = x                                               \\
(x \mmul x) \mdiv x = x                                               \\
x \mdiv y = x \mmul (1 \mdiv y)
\end{eqncol}
\end{eqntbl}
\end{table}
We write:
\begin{ldispl}
\begin{array}{@{}l@{\;}c@{\;}l@{}}
\eqnscr  &
\multicolumn{2}{@{}l@{}}
 {\mathrm{for\; the\; set\; of\; all\; equations\; in\; Table\;
          \ref{eqns-commutative-ring}}\;,}
\\
\eqnsinv  &
\multicolumn{2}{@{}l@{}}
 {\mathrm{for\; the\; set\; of\; all\; equations\; in\; Table\;
          \ref{eqns-add-inversive-meadow}}\;,}
\\
\eqnsdiv  &
\multicolumn{2}{@{}l@{}}
 {\mathrm{for\; the\; set\; of\; all\; equations\; in\; Table\;
          \ref{eqns-add-divisive-meadow}}\;,}
\\
\eqnsimd & \mathrm{for} & \eqnscr \union \eqnsinv\;,
\\
\eqnsdmd & \mathrm{for} & \eqnscr \union \eqnsdiv\;.
\end{array}
\end{ldispl}

The equation ${(x\minv)}\minv = x$ is called the reflexivity equation
and the equation $x \mmul (x \mmul x\minv) = x$ is called the restricted
inverse equation.
The first two equations in Table~\ref{eqns-add-divisive-meadow} are the
obvious counterparts of the reflexivity equation and restricted inverse
equation in divisive meadows.
The equation $0\minv = 0$ is derivable from the equations $\eqnsimd$.
The equation $x \mdiv 0 = 0$ is derivable from the equations $\eqnsdmd$.
The equation $1 \mdiv 0 = 0$ can be derived without using the equation
$x \mdiv y = x \mmul (1 \mdiv y)$, and then the latter equation can be
applied to derive the equation $x \mdiv 0 = 0$.

The advantage of working with a total multiplicative inverse operation
or total division operation lies in the fact that conditions like
$x \neq 0$ in $x \neq 0 \Implies x \mmul x\minv = 1$ or
$x \neq 0 \Implies x \mmul (1 \mdiv x) = 1$ are not needed to guarantee
meaning.

In~\cite{BL02a}, projection semantics is proposed as an approach to
define the meaning of programs.
Projection semantics explains the meaning of programs in terms of known
programs instead of in terms of more or less sophisticated mathematical
objects.
Here, we transpose this approach to the current setting to demonstrate
that it depends on the angle from which they are viewed whether
inversive meadows or divisive meadows must be considered more basic.

We can explain the meaning of the terms over the signature of divisive
meadows by means of a projection $\dmnimn$ from the divisive meadow
notation to the inversive meadow notation.
This projection is defined as follows:
\begin{sldispl}
\dmnimn(x) = x\;, \\
\dmnimn(0) = 0\;, \\
\dmnimn(1) = 1\;, \\
\dmnimn(p + q) = \dmnimn(p) + \dmnimn(q)\;, \\
\dmnimn(p \mmul q) = \dmnimn(p) \mmul \dmnimn(q)\;, \\
\dmnimn(- p) = - \dmnimn(p)\;, \\
\dmnimn(p \mdiv q) = \dmnimn(p) \mmul (\dmnimn(q)\minv)\;.
\end{sldispl}
The projection $\dmnimn$ supports an interpretation of the theory of
divisive meadows in the theory of inversive meadows: for each equation
$p = q$ derivable from the equations $\eqnsdmd$, the equation
$\dmnimn(p) = \dmnimn(q)$ is derivable from the equations $\eqnsimd$.%
\footnote
{For the notion of a translation that supports a theory interpretation,
 see e.g.~\cite{Vis06a}.}
Therefore the projection $\dmnimn$ determines a mapping from divisive
meadows to inversive meadows.

We can also explain the meaning of the terms over the signature of
inversive meadows by means of a projection $\imndmn$ from the inversive
meadow notation to the divisive meadow notation.
This projection is defined as follows:
\pagebreak[2]
\begin{sldispl}
\imndmn(x) = x\;, \\
\imndmn(0) = 0\;, \\
\imndmn(1) = 1\;, \\
\imndmn(p + q) = \imndmn(p) + \imndmn(q)\;, \\
\imndmn(p \mmul q) = \imndmn(p) \mmul \imndmn(q)\;, \\
\imndmn(- p) = - \imndmn(p)\;, \\
\imndmn(p\minv) = 1 \mdiv \imndmn(p)\;.
\end{sldispl}
The projection $\imndmn$ supports an interpretation of the theory of
inversive meadows in the theory of divisive meadows: for each equation
$p = q$ derivable from the equations $\eqnsimd$, the equation
$\imndmn(p) = \imndmn(q)$ is derivable from the equations $\eqnsdmd$.
Therefore the projection $\imndmn$ determines a mapping from inversive
meadows to divisive meadows.

Given the finite equational specification of the class of all inversive
meadows, we can easily give a modular specification of the class of all
divisive meadows using module algebra~\cite{BHK88a}.
In Appendix~\ref{sect-module-algebra}, we give the modular specification
in question and show that the equational theory associated with it is
the same as the equational theory associated with the equational
specification of the class of all divisive meadows.

A \emph{non-trivial inversive meadow} is an inversive meadow that
satisfies the \emph{separation axiom} $0 \neq 1$.
An \emph{inversive cancellation meadow} is an inversive meadow that
satisfies the \emph{cancellation axiom}
$x \neq 0 \And x \mmul y = x \mmul z \Implies y = z$,
or equivalently, the \emph{general inverse law}
$x \neq 0 \Implies x \mmul x\minv = 1$.
An \emph{inversive Komori field} is an inversive meadow that satisfies
the separation axiom and the cancellation axiom.
A \emph{non-trivial divisive meadow} is an divisive meadow that
satisfies the separation axiom.
A \emph{divisive cancellation meadow} is an divisive meadow that
satisfies the cancellation axiom.
A \emph{divisive Komori field} is an divisive meadow that satisfies
the separation axiom and the cancellation axiom.

An important property of inversive Komori fields is the following:
$0 \mmul (0\minv) = 0$, whereas $x \mmul (x\minv) = 1$ for $x \neq 0$.
An important property of divisive Komori fields is the following: $0
\mdiv 0 = 0$, whereas $x \mdiv x = 1$ for $x \neq 0$.

The inversive Komori field that we are most interested in is $\Ratzi$,
the inversive Komori field of rational numbers.
The divisive Komori field that we are most interested in is $\Ratzd$,
the divisive Komori field of rational numbers.
In Section~\ref{sect-meadows-rat}, both $\Ratzi$ and $\Ratzd$ will be
obtained by means of the well-known initial algebra construction.
$\Ratzi$ differs from the field of rational numbers only in that the
multiplicative inverse of zero is zero.
$\Ratzd$ differs from $\Ratzi$ only in that the multiplicative inverse
operation is replaced by a division operation such that
$x \mdiv y = x \mmul y\minv$.

A reduced divisive meadow is an algebra over the signature
$\set{1,{} - {},{} \mdiv {}}$ that satisfies the equations given in
Table~\ref{eqns-reduced-divisive-meadow}.
\begin{table}[!t]
\caption{Axioms of a reduced divisive meadow}
\label{eqns-reduced-divisive-meadow}
\begin{eqntbl}
\begin{eqncol}
(x - ((1 - 1) - y)) - ((1 - 1) - z) =
x - ((1 - 1) - (y - ((1 - 1) - z)))                                   \\
x - ((1 - 1) - y) = y - ((1 - 1) - x)                                 \\
x - (1 - 1) = x                                                       \\
x - x = 1 - 1                                                         \\
(x \mdiv (1 \mdiv y)) \mdiv (1 \mdiv z) =
x \mdiv (1 \mdiv (y \mdiv (1 \mdiv z)))                               \\
x \mdiv (1 \mdiv y) = y \mdiv (1 \mdiv x)                             \\
x \mdiv 1 = x                                                         \\
x \mdiv (1 \mdiv (y - ((1 - 1) - z))) =
x \mdiv (1 \mdiv y) - ((1 - 1) - (x \mdiv (1 \mdiv z)))               \\
( x \mdiv (1 \mdiv x)) \mdiv x = x
\end{eqncol}
\end{eqntbl}
\end{table}
We can explain the meaning of the terms over the signature of inversive
meadows by means of a projection $\imnrdmn$ to  terms over the signature
of reduced divisive meadows.
This projection is defined as follows:
\pagebreak[2]
\begin{sldispl}
\imnrdmn(x) = x\;, \\
\imnrdmn(0) = 1 - 1\;, \\
\imnrdmn(1) = 1\;, \\
\imnrdmn(p + q) = \imnrdmn(p) - ((1 - 1) - \imnrdmn(q))\;, \\
\imnrdmn(p \mmul q) = \imnrdmn(p) \mdiv (1 \mdiv \imnrdmn(q))\;, \\
\imnrdmn(- p) = (1 - 1) - \imnrdmn(p)\;, \\
\imnrdmn(p\minv) = 1 \mdiv \imnrdmn(p)\;.
\end{sldispl}
The projection $\imnrdmn$ supports an interpretation of the theory of
inversive meadows in the theory of reduced divisive meadows.

The following are some outstanding questions with regard to inversive
meadows, divisive meadows, and reduced divisive meadows:
\begin{enumerate}
\item
Do there exist equational specifications of the class of all inversive
meadows, the class of all divisive meadows, and the class of all reduced
divisive meadows with less than $10$ equations, $11$ equations, and $9$
equations, respectively?
\item
Can the number of binary operators needed to explain the meaning of the
terms over the signature of inversive meadows be reduced from two to
one?
\end{enumerate}

\section{Arithmetical Meadows}
\label{sect-arith-meadows}

In this section, we give finite equational specifications of the class
of all inversive arithmetical meadows, the class of all divisive
arithmetical meadows, the class of all inversive arithmetical meadows
with zero and the class of all divisive arithmetical meadows with zero.

The signatures of inversive and divisive arithmetical meadows with zero
are the signatures of inversive and divisive meadows with the additive
inverse operator $- {}$ removed.
The signatures of inversive and divisive arithmetical meadows are the
signatures of inversive and divisive arithmetical meadows with zero with
the additive identity constant $0$ removed.
We write:
\begin{ldispl}
\begin{array}{@{}l@{\;}c@{\;}l@{}}
\sigiamdz & \mathrm{for} & \sigimd \diff \set{- {}}\;,
\\
\sigdamdz & \mathrm{for} & \sigdmd \diff \set{- {}}\;,
\\
\sigiamd  & \mathrm{for} & \sigiamdz \diff \set{0}\;,
\\
\sigdamd  & \mathrm{for} & \sigdamdz \diff \set{0}\;.
\end{array}
\end{ldispl}
Moreover, we write:
\begin{ldispl}
\begin{array}{@{}l@{\;}c@{\;}l@{}}
\eqnsacrz & \mathrm{for} &
\eqnscr \diff \set{x + (-x) = 0}\;,
\\
\eqnsacr  & \mathrm{for} &
\eqnsacrz \diff \set{x + 0 = x}\;.
\end{array}
\end{ldispl}
The equations in $\eqnsacrz$ are the equations from $\eqnscr$ in which
the additive inverse operator $- {}$ does not occur.
The equations in $\eqnsacr$ are the equations from $\eqnsacrz$ in which
the additive identity constant $0$ does not occur.

An \emph{inversive arithmetical meadow} is an algebra over the
signature $\sigiamd$ that satisfies the equations $\eqnsacr$ and the
equation $x \mmul x\minv = 1$.
A \emph{divisive arithmetical meadow} is an algebra over the signature
$\sigdamd$ that satisfies the equations $\eqnsacr$ and the equation
$x \mdiv x = 1$.
An \emph{inversive arithmetical meadow with zero} is an algebra over the
signature $\sigiamdz$ that satisfies the equations $\eqnsacrz$ and the
equations $\eqnsinv$.
A \emph{divisive arithmetical meadow with zero} is an algebra over the
signature $\sigdamdz$ that satisfies the equations $\eqnsacrz$ and the
equations $\eqnsdiv$.
We write:
\begin{ldispl}
\begin{array}{@{}l@{\;}c@{\;}l@{}}
\eqnsiamd & \mathrm{for} &
\eqnsacr \union \set{x \mmul x\minv = 1}\;,
\\
\eqnsdamd & \mathrm{for} &
\eqnsacr \union \set{x \mdiv x = 1}\;,
\\
\eqnsiamdz & \mathrm{for} &
\eqnsacrz \union \eqnsinv\;,
\\
\eqnsdamdz & \mathrm{for} &
\eqnsacrz \union \eqnsdiv\;.
\end{array}
\end{ldispl}

The arithmetical meadows that we are most interested in are the
arithmetical meadows of rational numbers and the arithmetical meadow
with zero that we are most interested in are the arithmetical meadow
of rational numbers with zero.
In Section~\ref{sect-arith-meadows-rat}, those arithmetical meadows will
be obtained by means of the well-known initial algebra construction.
The following lemmas about arithmetical meadows and arithmetical
meadows with zero will be used in Section~\ref{sect-arith-meadows-rat}.
\begin{lemma}
\label{lemma-numerals}
For all $n,m \in \Nat \diff \set{0}$, we have that
$\eqnsacr \deriv \ul{n + m} = \ul{n} + \ul{m}$ and
$\eqnsacr \deriv \ul{n \mmul m} = \ul{n} \mmul \ul{m}$.
\end{lemma}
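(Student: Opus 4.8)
The plan is to prove the two identities by a pair of inductions, establishing the additive identity first and then feeding it into the inductive step for the multiplicative identity. The only facts about numerals I would use are that $\ul{1} = 1$ and that $\ul{k + 1} = \ul{k} + 1$ for every $k \geq 1$, so that each numeral occurring below is built solely from $1$ and the addition operator. In particular the constant $0$ never appears, which matters because $\eqnsacr$ contains neither $x + 0 = x$ nor $x + (-x) = 0$; all the arithmetic we need must be carried out with the remaining axioms.

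For $\eqnsacr \deriv \ul{n + m} = \ul{n} + \ul{m}$ I would fix $n \geq 1$ and induct on $m \geq 1$. The base case $m = 1$ is the definitional identity $\ul{n + 1} = \ul{n} + 1 = \ul{n} + \ul{1}$ and needs no axiom. For the step, the numeral recursion gives $\ul{n + (m + 1)} = \ul{(n + m) + 1} = \ul{n + m} + 1$; the induction hypothesis turns this into $(\ul{n} + \ul{m}) + 1$, and a single application of the associativity axiom for addition rewrites it to $\ul{n} + (\ul{m} + 1) = \ul{n} + \ul{m + 1}$. Hence this half rests on associativity of addition alone.

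For $\eqnsacr \deriv \ul{n \mmul m} = \ul{n} \mmul \ul{m}$ I would again fix $n \geq 1$ and induct on $m \geq 1$. The base case reduces to $\ul{n \mmul 1} = \ul{n} = \ul{n} \mmul 1 = \ul{n} \mmul \ul{1}$ by the multiplicative identity axiom $x \mmul 1 = x$. For the step I would rewrite the left-hand side as $\ul{n \mmul (m + 1)} = \ul{n \mmul m + n}$, apply the already proved additive identity, which is legitimate because $n \mmul m \geq 1$ and $n \geq 1$, to obtain $\ul{n \mmul m} + \ul{n}$, and then use the induction hypothesis to reach $\ul{n} \mmul \ul{m} + \ul{n}$. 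On the right-hand side, $\ul{n} \mmul \ul{m + 1} = \ul{n} \mmul (\ul{m} + 1)$ equals $\ul{n} \mmul \ul{m} + \ul{n} \mmul 1$ by distributivity and then $\ul{n} \mmul \ul{m} + \ul{n}$ by $x \mmul 1 = x$, so the two sides agree. This half therefore uses distributivity, the multiplicative identity axiom, and the additive identity established first.

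I do not anticipate a genuine difficulty, since the argument is routine induction and rewriting. The single point that requires attention is to check that every numeral generated in the course of the derivations has index at least $1$, so that it is a well-formed term over the reduced signature and no axiom governing $0$ is ever invoked; this is exactly what the hypothesis $n, m \in \Nat \diff \set{0}$ secures, together with the fact that the sum and product of positive naturals are again positive.
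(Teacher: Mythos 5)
Your proof is correct and takes essentially the same route as the paper's: a routine induction establishing the additive identity first, followed by an induction for the multiplicative identity that feeds in the additive result, the only cosmetic differences being that you induct on $m$ rather than $n$ (and so apply the distributivity axiom directly instead of through commutativity of multiplication). Your explicit check that every numeral arising in the derivations has index at least $1$, so that no axiom involving $0$ is needed, is a point the paper leaves implicit.
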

\begin{proof}
The fact that $\ul{n + m} = \ul{n} + \ul{m}$ is derivable from
$\eqnsacr$ is easily proved by induction on $n$.
The basis step is trivial.
The inductive step goes as follows:
$\ul{(n + 1) + m} = \ul{(n + m) + 1} = \ul{n + m} + 1 =
 \ul{n} + \ul{m} + 1 = \ul{n} + 1 + \ul{m} = \ul{n + 1} + \ul{m}$.
The fact that $\ul{n \mmul m} = \ul{n} \mmul \ul{m}$ is derivable from
$\eqnsacr$ is easily proved by induction on $n$, using that
$\ul{n + m} = \ul{n} + \ul{m}$ is derivable from $\eqnsacr$.
The basis step is trivial.
The inductive step goes as follows:
$\ul{(n + 1) \mmul m} = \ul{n \mmul m + 1 \mmul m} =
 \ul{n \mmul m} + \ul{1 \mmul m} =
 \ul{n} \mmul \ul{m} + \ul{1} \mmul \ul{m} =
 (\ul{n} + \ul{1}) \mmul \ul{m} = \ul{n + 1} \mmul \ul{m}$.
\qed
\end{proof}
\begin{lemma}
\label{lemma-mult-inv-ratia}
We have $\eqnsiamd \deriv (x\minv)\minv = x$ and
$\eqnsiamd \deriv (x \mmul y)\minv = x\minv \mmul y\minv$.
\end{lemma}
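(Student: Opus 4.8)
The plan is to exploit the fact that, unlike in a full inversive meadow, the restricted inverse equation is here replaced by the \emph{unconditional} general inverse law $x \mmul x\minv = 1$. Together with the associativity, commutativity and multiplicative-identity axioms contained in $\eqnsacr$, this law turns the operations $\mmul$, $1$ and ${}\minv$ of any inversive arithmetical meadow into an abelian group. Both target equations are then nothing but the familiar group-theoretic facts that taking inverses is involutive and that the inverse of a product is the product of the inverses. The whole argument thus reduces to purely equational reasoning inside this abelian group, and the addition axioms (including distributivity) play no role whatsoever.

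The key tool is \emph{uniqueness of multiplicative inverses}, which I would establish first in equational form: whenever $a \mmul b = 1$ and $a \mmul c = 1$, the chain $b = 1 \mmul b = (a \mmul c) \mmul b = c \mmul (a \mmul b) = c \mmul 1 = c$, using associativity, commutativity and $1 \mmul x = x$ (the latter obtained from $x \mmul 1 = x$ and commutativity), shows $b = c$. Because the general inverse law guarantees that \emph{every} element is invertible, no case distinction and no conditional reasoning are needed; this is precisely what makes the arithmetical setting smoother than the ordinary one, where $x \mmul x\minv = 1$ only holds away from $0$.

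For the first equation I would apply the general inverse law to $x\minv$, obtaining $x\minv \mmul (x\minv)\minv = 1$, while commutativity applied to the law itself gives $x\minv \mmul x = 1$; uniqueness of inverses then yields $(x\minv)\minv = x$. Concretely, this is the chain $(x\minv)\minv = 1 \mmul (x\minv)\minv = (x \mmul x\minv) \mmul (x\minv)\minv = x \mmul (x\minv \mmul (x\minv)\minv) = x \mmul 1 = x$.

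For the second equation I would first compute $(x \mmul y) \mmul (x\minv \mmul y\minv) = (x \mmul x\minv) \mmul (y \mmul y\minv) = 1 \mmul 1 = 1$ by reassociating and commuting the four factors and applying the general inverse law twice. Since also $(x \mmul y) \mmul (x \mmul y)\minv = 1$, uniqueness of inverses delivers $(x \mmul y)\minv = x\minv \mmul y\minv$. There is no genuine obstacle here; the only point requiring care is to keep every rearrangement of factors justified by explicit applications of associativity and commutativity, rather than by an informal ``up to reordering'', so that the derivation stays strictly within equational logic.
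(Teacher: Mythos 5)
Your proof is correct and follows essentially the same route as the paper: your chain for $(x\minv)\minv = x$ is identical to the one given there, and your derivation of $(x \mmul y)\minv = x\minv \mmul y\minv$ via uniqueness of inverses unfolds to the paper's single equational chain $(x \mmul y)\minv = (x \mmul x\minv) \mmul ((y \mmul y\minv) \mmul (x \mmul y)\minv) = (x\minv \mmul y\minv) \mmul ((x \mmul y) \mmul (x \mmul y)\minv) = x\minv \mmul y\minv$. Your observation that the addition axioms play no role and that the general inverse law makes the multiplicative reduct an abelian group is accurate and matches the spirit of the paper's argument.
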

\begin{proof}
We derive $(x\minv)\minv = x$ from $\eqnsiamd$ as
follows:
$(x\minv)\minv = 1 \mmul (x\minv)\minv =
 (x \mmul x\minv) \mmul (x\minv)\minv =
 x \mmul (x\minv \mmul (x\minv)\minv) =
 x \mmul 1 = x$.
We derive $(x \mmul y)\minv = x\minv \mmul y\minv$ from $\eqnsiamd$ as
follows:
$(x \mmul y)\minv = 1 \mmul (1 \mmul (x \mmul y)\minv) =
 (x \mmul x\minv) \mmul ((y \mmul y\minv) \mmul (x \mmul y)\minv) =
 (x\minv \mmul y\minv) \mmul ((x \mmul y) \mmul (x \mmul y)\minv) =
 (x\minv \mmul y\minv) \mmul 1 = x\minv \mmul y\minv$.
\qed
\end{proof}
\begin{lemma}
\label{lemma-zero-ratiaz}
We have $\eqnsiamdz \deriv 0 \mmul x = 0$ and
$\eqnsiamdz \deriv 0\minv = 0$.
\end{lemma}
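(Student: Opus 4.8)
The two equations are the zero laws for inversive arithmetical meadows with zero, and the plan is to treat them separately, obtaining $0\minv = 0$ from the easy fragment $0 \mmul 0 = 0$ together with the inverse equations, and isolating the general law $0 \mmul x = 0$ as the real work. The point to keep in mind is why $0 \mmul x = 0$ is not routine: from the instance $0 + 0 = 0$ of $x + 0 = x$ and distributivity one obtains only $0 \mmul x = 0 \mmul x + 0 \mmul x$, and in a commutative ring one would finish by adding $-(0 \mmul x)$ to both sides. Because $\sigiamdz = \sigimd \diff \set{- {}}$ has no additive inverse operator, this cancellation is gone, so the equations of $\eqnsinv$ must be used in an essential way. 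I expect $0 \mmul x = 0$ to be the main obstacle.

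I would first record the inverse-free fact $0 \mmul 0 = 0$: since $0 \mmul 1 = 0$ (an instance of $x \mmul 1 = x$) and $0 + 1 = 1$, distributivity gives $0 \mmul 0 = 0 \mmul 0 + 0 \mmul 1 = 0 \mmul (0 + 1) = 0 \mmul 1 = 0$, using only $\eqnsacrz$. From this the second equation follows by two applications of the restricted inverse equation. Instantiating it at $0$ gives $0 \mmul (0 \mmul 0\minv) = 0$, and rewriting $0 \mmul 0$ to $0$ turns the left-hand side into $0 \mmul 0\minv$, so $0 \mmul 0\minv = 0$. Instantiating it at $0\minv$ gives $0\minv \mmul (0\minv \mmul (0\minv)\minv) = 0\minv$, and the reflexivity equation $(0\minv)\minv = 0$ rewrites the left-hand side to $0\minv \mmul (0\minv \mmul 0)$; since $0\minv \mmul 0 = 0 \mmul 0\minv = 0$, this collapses to $0$, whence $0\minv = 0$. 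Thus $\eqnsiamdz \deriv 0\minv = 0$ needs only $0 \mmul 0 = 0$, not the full law.

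The delicate heart is therefore promoting $0 \mmul 0 = 0$ to $0 \mmul x = 0$ for arbitrary $x$ using $\eqnsacrz$ together with reflexivity and the restricted inverse equation. The structural facts I would lean on are that every $0 \mmul x$ is additively idempotent and that, by the previous paragraph, its candidate support idempotent $0 \mmul 0\minv$ already equals $0$; the idea is to rewrite $0 \mmul x$ through the von~Neumann form $0 = 0 \mmul 0 \mmul 0\minv$ and make the factor $0 \mmul 0\minv$ do the collapsing. This is exactly the step I would scrutinize hardest, since additive idempotency together with the inverse laws is a far weaker lever than the additive-inverse cancellation available in full meadows; getting the general $x$ to reduce, rather than merely reproducing $0 \mmul x = 0 \mmul x + 0 \mmul x$, is where the argument must genuinely exploit $\eqnsinv$, and I would be prepared for this to require an auxiliary identity about multiples of $0$ rather than a one-line calculation.
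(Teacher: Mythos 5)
Your handling of the second equation is correct and complete: $0 \mmul 0 = 0$ follows from $\eqnsacrz$ alone exactly as you compute it, the instance $0 \mmul 0\minv = 0$ then follows by associativity from the restricted inverse equation instantiated at $0$, and $0\minv = 0$ follows from the restricted inverse equation instantiated at $0\minv$ together with reflexivity. This is in fact tighter than the paper's own route, which obtains $0\minv = 0$ by appealing to the general law $0 \mmul x = 0$.

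The gap is in the first equation, and your instinct about where the danger lies is exactly right --- so right that the step cannot be completed at all. Your third paragraph is a plan rather than a derivation, and no derivation exists: $0 \mmul x = 0$ is not a semantic consequence of $\eqnsiamdz$. A countermodel: take carrier $\set{0,1,a}$, let ${}+{}$ be the maximum with respect to the order $0 < 1 < a$, let ${}\mmul{}$ be the maximum with respect to the order $1 < 0 < a$, and let ${}\minv$ be the identity map. All equations of $\eqnsacrz$ hold: $0$ and $1$ are the least elements of the respective orders and hence neutral, associativity and commutativity of maxima are clear, and distributivity is a finite check whose only nontrivial instances have first factor $0$, where both sides equal $a$ precisely when $y$ or $z$ equals $a$ and equal $0$ otherwise. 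The equations of $\eqnsinv$ hold because ${}\minv$ is an involution and every element is multiplicatively idempotent. Yet $0 \mmul a = a \neq 0$. For comparison, the paper's proof of this half stumbles at exactly your ``delicate heart'': it derives the rule $x + y = x \Implies y = 0$ by substituting $0$ for $x$, and then applies it to $x + 0 \mmul x = x$ with $y$ taken to be $0 \mmul x$; but that substitution also hits the $x$ inside $0 \mmul x$, so the argument only delivers $0 \mmul 0 = 0$. Consequently no auxiliary identity will rescue your sketched collapsing step: the first half of the lemma is false as stated, and repairing it (and the results downstream that rely on it) would require adding $x \mmul 0 = 0$ to $\eqnsacrz$ as an axiom, as is standard for semirings without additive inverses.
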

\begin{proof}
Firstly, we derive $x + y = x \Implies y = 0$ from $\eqnsiamdz$ as
follows:
$x + y = x \Implies 0 + y = 0 \Implies y + 0 = 0 \Implies y = 0$.
Secondly, we derive $x + 0 \mmul x = x$ from $\eqnsiamdz$ as follows:
$x + 0 \mmul x = x \mmul 1 + 0 \mmul x = 1 \mmul x + 0 \mmul x =
 (1 + 0) \mmul x = 1 \mmul x = x \mmul 1 = x$.
From $x + y = x \Implies y = 0$ and $x + 0 \mmul x = x$, it follows that
$0 \mmul x = 0$.
We derive $0\minv = 0$ from $\eqnsiamdz$ as follows:
$0\minv = 0\minv \mmul (0\minv \mmul (0\minv)\minv) =
 (0\minv)\minv \mmul (0\minv \mmul 0\minv) =
 0 \mmul (0\minv \mmul 0\minv) = 0$.
\qed
\end{proof}
\begin{lemma}
\label{lemma-mult-inv-ratiaz}
We have $\eqnsiamdz \deriv (x \mmul y)\minv = x\minv \mmul y\minv$.
\end{lemma}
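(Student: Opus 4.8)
The plan is to sidestep the general inverse law $x \mmul x\minv = 1$ — which is unavailable here, since $0 \mmul 0\minv = 0$ rather than $1$, so the computation of Lemma~\ref{lemma-mult-inv-ratia} cannot be reused — and instead to exploit the fact that, in the presence of the reflexivity equation and the restricted inverse equation, $a\minv$ is the \emph{unique pseudoinverse} of $a$: it is the only element $b$ satisfying both $a \mmul a \mmul b = a$ and $a \mmul b \mmul b = b$. The strategy is therefore to show that $x\minv \mmul y\minv$ obeys these two characterising relations with respect to $a := x \mmul y$, and then to deduce its equality with $(x \mmul y)\minv$ by an equational uniqueness argument.

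First I would record one derived equation. Instantiating the restricted inverse equation $x \mmul (x \mmul x\minv) = x$ at $x\minv$ and rewriting with the reflexivity equation $(x\minv)\minv = x$ yields $x \mmul (x\minv \mmul x\minv) = x\minv$; call this $(\mathrm{C})$, and read the restricted inverse equation as $(\mathrm{A})$: $x \mmul x \mmul x\minv = x$. These are the only facts about $\minv$ that the derivation needs; notably, neither the additive axioms nor $0 \mmul x = 0$ are required, so only the multiplicative part of $\eqnsacrz$ together with $\eqnsinv$ is used.

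Next I would verify the two characterising relations for the candidate $c := x\minv \mmul y\minv$ against $a := x \mmul y$. Regrouping by commutativity and associativity of multiplication, $a \mmul a \mmul c = (x \mmul x \mmul x\minv) \mmul (y \mmul y \mmul y\minv)$, which equals $x \mmul y = a$ by two applications of $(\mathrm{A})$; and $a \mmul c \mmul c = (x \mmul x\minv \mmul x\minv) \mmul (y \mmul y\minv \mmul y\minv)$, which equals $x\minv \mmul y\minv = c$ by two applications of $(\mathrm{C})$. On the other hand, the abstract inverse $b := (x \mmul y)\minv$ satisfies $a \mmul a \mmul b = a$ and $a \mmul b \mmul b = b$ directly, by instantiating $(\mathrm{A})$ and $(\mathrm{C})$ at $x \mmul y$.

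Finally I would run the uniqueness computation purely equationally. From $a = a \mmul a \mmul c$ and $a = a \mmul a \mmul b$ one obtains $a \mmul b = (a \mmul a \mmul c) \mmul b = (a \mmul a \mmul b) \mmul c = a \mmul c$; and then $b = a \mmul b \mmul b = a \mmul c \mmul b = a \mmul c \mmul c = c$, using $a \mmul b = a \mmul c$ twice. This delivers $\eqnsiamdz \deriv (x \mmul y)\minv = x\minv \mmul y\minv$. The one genuinely delicate point is that the uniqueness of the pseudoinverse must be carried out as a chain of equational rewrites rather than invoked as a semantic property of commutative von Neumann regular rings; the short derivation above is precisely what converts that uniqueness into a derivation from $\eqnsiamdz$, and it is the step where the restricted inverse equation does the work that the general inverse law did in the $0$-free case.
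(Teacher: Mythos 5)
Your derivation is correct, and every step is available in $\eqnsiamdz$: the auxiliary equation $(\mathrm{C})$, $x \mmul (x\minv \mmul x\minv) = x\minv$, follows from the restricted inverse equation instantiated at $x\minv$ together with reflexivity, the two characterising identities for $a = x \mmul y$, $b = (x\mmul y)\minv$, $c = x\minv \mmul y\minv$ need only commutativity and associativity of $\mmul$, and the final uniqueness chain $a \mmul b = a \mmul c$, $b = a\mmul b\mmul b = a\mmul c\mmul c = c$ is a legitimate sequence of equational rewrites. Your route is, however, genuinely different from the paper's: the paper does not give a derivation at all, but cites Proposition~2.8 of~\cite{BHT09a} (which proves the identity from the full inversive meadow axioms) and merely observes that the proof given there makes no use of $x + 0 = x$ and $x + (-x) = 0$, so it survives the passage to the arithmetical-with-zero setting. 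Your self-contained argument via uniqueness of the pseudoinverse buys two things the citation does not: it makes the paper's ``no additive axioms are used'' claim checkable on the spot, and it shows the sharper fact that the identity already holds in the purely multiplicative fragment ($\eqnsinv$ plus commutativity and associativity of $\mmul$), without even $x \mmul 1 = x$ or distributivity. Your opening observation that the computation of Lemma~\ref{lemma-mult-inv-ratia} cannot be recycled because the general inverse law $x \mmul x\minv = 1$ fails at $0$ is also the right diagnosis of why a separate argument is needed here.
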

\begin{proof}
Proposition~2.8 from~\cite{BHT09a} states that
$(x \mmul y)\minv = x\minv \mmul y\minv$ is derivable from
$\eqnsiamdz \union \set{x + 0 = x, x + (-x) = 0}$.
The proof of this proposition given in~\cite{BHT09a} goes through
because no use is made of the equations $x + 0 = x$ and $x + (-x) = 0$.
\qed
\end{proof}
\begin{lemma}
\label{lemma-zero-elim-ratiaz}
For each $\sigiamdz$-term $t$, either $\eqnsiamdz \deriv t = 0$ or there
exists a $\sigiamd$-term $t'$ such that $\eqnsiamdz \deriv t = t'$.
\end{lemma}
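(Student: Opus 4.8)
The plan is to proceed by structural induction on the $\sigiamdz$-term $t$, establishing at each stage that $t$ satisfies one of the two stated alternatives. The dichotomy ``$t$ is provably equal to $0$, or provably equal to a $0$-free term'' is exactly what the induction hypothesis delivers for each proper subterm, so the work consists in combining the two possible outcomes through the outermost operator and checking that the term produced in the second alternative is again a $\sigiamd$-term.

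For the base cases, $t = 0$ falls immediately into the first alternative, while $t = 1$ and $t = x$ (a variable) are already $\sigiamd$-terms and hence fall into the second alternative with $t' = t$. For the inductive cases I would apply the induction hypothesis to each immediate subterm and split into cases according to which alternative each subterm satisfies.

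In the addition case $t = p + q$: if both $p$ and $q$ are provably $0$, then $\eqnsiamdz \deriv t = 0$ by the unit equation $x + 0 = x$; if exactly one of them, say $p$, is provably $0$ and the other is provably equal to a $\sigiamd$-term $q'$, then $\eqnsiamdz \deriv t = q'$ using commutativity together with $x + 0 = x$; and if neither is provably $0$, then $t$ is provably equal to the sum $p' + q'$ of the two supplied $\sigiamd$-terms, which is itself a $\sigiamd$-term. In the multiplication case $t = p \mmul q$: if either factor is provably $0$, then $\eqnsiamdz \deriv t = 0$ by Lemma~\ref{lemma-zero-ratiaz}, which supplies $0 \mmul x = 0$, with commutativity used to handle a zero right-hand factor; otherwise $t$ is provably equal to the product $p' \mmul q'$ of the two supplied $\sigiamd$-terms. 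In the inverse case $t = p\minv$: if $p$ is provably $0$, then $\eqnsiamdz \deriv t = 0$ by the other half of Lemma~\ref{lemma-zero-ratiaz}, namely $0\minv = 0$; otherwise $t$ is provably equal to $(p')\minv$ for the $\sigiamd$-term $p'$ supplied by the induction hypothesis.

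The induction is routine, and I do not expect a substantial obstacle. The only steps that reach beyond the plain associativity, commutativity, and unit equations available in $\eqnsacrz$ are the elimination of a zero factor from a product and the elimination of a zero argument from an inverse; these are precisely the derived facts $0 \mmul x = 0$ and $0\minv = 0$ already established in Lemma~\ref{lemma-zero-ratiaz}. Thus the crux of the argument is simply the availability of that lemma in the two nontrivial inductive cases.
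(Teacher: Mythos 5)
Your proof is correct and follows exactly the route the paper takes: the paper's own proof consists of the single remark that the result follows by induction on the structure of $t$ using Lemma~\ref{lemma-zero-ratiaz}, which is precisely the argument you have spelled out. Your case analysis correctly identifies $0 \mmul x = 0$ and $0\minv = 0$ as the only facts needed beyond the ring-without-additive-inverse axioms in $\eqnsacrz$.
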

\begin{proof}
The proof is easy by induction on the structure of $t$, using
Lemma~\ref{lemma-zero-ratiaz}.
\qed
\end{proof}

\section{Arithmetical Meadows and Regular Arithmetical Rings}
\label{sect-relation-arith-rings}

We can define commutative arithmetical rings with a multiplicative
identity element in the same vein as arithmetical meadows.
Moreover, we can define commutative von Neumann regular arithmetical
rings with a multiplicative identity element as commutative arithmetical
rings with a multiplicative identity element satisfying the regularity
condition $\Forall{x}{\Exists{y}{x \mmul (x \mmul y) = x}}$.

The following theorem states that commutative von Neumann regular
arithmetical rings with a multiplicative identity element are related to
inversive arithmetical meadows like commutative von Neumann regular
rings with a multiplicative identity element are related to inversive
meadows.
\begin{theorem}
Each commutative von Neumann regular arithmetical ring with a
multiplicative identity element can be expanded to an inversive
arithmetical meadow, and this expansion is unique.
\end{theorem}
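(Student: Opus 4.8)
The plan is to mirror the expansion theorem for ordinary (non-arithmetical) meadows recalled in Section~\ref{sect-background}, namely that each commutative von~Neumann regular ring with a multiplicative identity element expands uniquely to an inversive meadow. Given a commutative von~Neumann regular arithmetical ring $A$ with a multiplicative identity element, I would treat existence and uniqueness separately: first build a suitable operation $\minv$ from the regularity witness, then show any such operation is forced. The decisive difference with the non-arithmetical case is that an inversive arithmetical meadow must satisfy the strong law $x \mmul x\minv = 1$ rather than merely the restricted inverse equation $x \mmul (x \mmul x\minv) = x$, so \emph{every} element of $A$ must be genuinely invertible. This is precisely where the arithmetical character of $A$ (the absence of $0$ and of $-{}$) will have to be used.

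For existence, fix $x$ and choose, by the regularity condition $\Forall{x}{\Exists{y}{x \mmul (x \mmul y) = x}}$, an element $y$ with $x \mmul (x \mmul y) = x$. Put $e = x \mmul y$; then $e$ is idempotent and $e \mmul x = x$. I would define $x\minv = x \mmul (y \mmul y)$. Using $x \mmul (x \mmul y) = x$ together with the equations $\eqnsacr$, a short rearrangement gives $x \mmul x\minv = x \mmul y = e$, so the law $x \mmul x\minv = 1$ amounts exactly to the claim that the support idempotent $e$ equals $1$. Before verifying that, one checks that the definition does not depend on the choice of $y$: if $y'$ is a second witness, the usual commutative von~Neumann regular manipulation shows $x \mmul (y \mmul y) = x \mmul (y' \mmul y')$, so $\minv$ is a well-defined operation on $A$. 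Since $A$ already satisfies $\eqnsacr$, it then remains only to establish $x \mmul x\minv = 1$ to conclude that $(A,1,{}+{},\mmul,\minv)$ satisfies $\eqnsiamd$, i.e.\ is an inversive arithmetical meadow.

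I expect the main obstacle to be exactly the identity $e = x \mmul y = 1$, that is, the invertibility of every element (equivalently, that $1$ is the only multiplicative idempotent). In an ordinary meadow this is false: there $x \mmul x\minv$ is the idempotent carrying the support of $x$, only the restricted inverse equation holds, and a proper support idempotent $e$ corresponds, via its additive complement, to a nonzero element annihilated by $x$. In the arithmetical setting there is neither an additive identity to serve as such an annihilator nor an additive inverse with which to form a complementary idempotent, and the proof should turn this observation into a derivation of $e = 1$ from regularity and $\eqnsacr$. This step is the crux; the remainder is bookkeeping.

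Uniqueness is then immediate and does not even use regularity. Suppose $f$ and $g$ are operations on $A$ each making $(A,1,{}+{},\mmul,-)$ an inversive arithmetical meadow, so that $x \mmul f(x) = 1$ and $x \mmul g(x) = 1$ for all $x$. Then, using commutativity and associativity of $\mmul$ and $x \mmul 1 = x$, we have $f(x) = f(x) \mmul 1 = f(x) \mmul (x \mmul g(x)) = (f(x) \mmul x) \mmul g(x) = 1 \mmul g(x) = g(x)$, whence $f = g$. Thus, once existence is established, the expansion is unique, which completes the plan.
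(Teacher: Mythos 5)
Your route is not the paper's. The paper gives no direct construction: its entire proof is the observation that the proof of Lemma~2.11 from~\cite{BHT09a} --- which expands a commutative von Neumann regular ring with a multiplicative identity element to an \emph{inversive meadow}, i.e.\ to an algebra satisfying the reflexivity equation and the restricted inverse equation $x \mmul (x \mmul x\minv) = x$ --- uses the axioms $x + 0 = x$ and $x + (-x) = 0$ only inside one auxiliary lemma, for which an alternative proof avoiding those axioms exists in~\cite{BRS09a}; hence the whole argument carries over to structures satisfying only $\eqnsacr$. In particular, the expansion the paper actually produces satisfies $\eqnsacr \union \eqnsinv$, not the general inverse law.

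Your attempt instead takes the definition of an inversive arithmetical meadow at face value and aims for $x \mmul x\minv = 1$. Up to the crux you are fine: $x\minv = x \mmul (y \mmul y)$ is independent of the regularity witness $y$ and gives $x \mmul x\minv = x \mmul y = e$ with $e$ idempotent, and your uniqueness argument is correct \emph{given} the general inverse law. But the step you defer --- deriving $e = 1$ from regularity and $\eqnsacr$ --- is not merely missing, it would fail. Deleting $0$ and $- {}$ from the signature does not delete zero-like elements from the models: the $\set{1,{} + {},{} \mmul {}}$-reduct of the field $\bbZ_2$ satisfies $\eqnsacr$ and the regularity condition $\Forall{x}{\Exists{y}{x \mmul (x \mmul y) = x}}$, yet its carrier still contains the element $0$, and $0 \mmul w = 0 \neq 1$ for every $w$, so no expansion of this algebra can satisfy $x \mmul x\minv = 1$. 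Your heuristic that ``there is no additive identity to serve as an annihilator'' conflates the signature with the carrier. The statement can only be established with the target weakened to $\eqnsacr \union \eqnsinv$ --- which is what the paper's proof in effect delivers --- and then uniqueness must be argued not from $x \mmul f(x) = 1$ but as in~\cite{BHT09a}, via the uniqueness of the weak inverse in a commutative von Neumann regular ring.
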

\begin{proof}
Lemma~2.11 from~\cite{BHT09a} states that each commutative von Neumann
regular ring with a multiplicative identity element can be expanded to
an inversive meadow, and this expansion is unique.
The only use that is made of the equations $x + 0 = x$ and
$x + (-x) = 0$ in the proof of this lemma given in~\cite{BHT09a}
originates from the proof of another lemma that is used in the proof.
However, the latter lemma, Lemma~2.12 from~\cite{BHT09a}, concerns the
same property as Proposition~2.3 from~\cite{BRS09a} and in the proof of
this proposition given in~\cite{BRS09a} no use is made of the equations
$x + 0 = x$ and $x + (-x) = 0$.
Hence, there is an alternative proof of Lemma~2.11 from~\cite{BHT09a}
that goes through for the arithmetical case.
\qed
\end{proof}

We can also define commutative arithmetical rings with additive and
multiplicative identities and commutative von Neumann regular
arithmetical rings with additive and multiplicative identities in the
obvious way.
We also have that commutative von Neumann regular arithmetical rings
with additive and multiplicative identities are related to inversive
arithmetical meadows with zero like commutative von Neumann regular
rings with a multiplicative identity element are related to inversive
meadows.

\section{Meadows of Rational Numbers}
\label{sect-meadows-rat}

In this section, we obtain inversive and divisive meadows of rational
numbers as initial algebras of finite equational specifications.
Moreover, we prove that the inversive meadow in question differs from the
field of rational numbers only in that the multiplicative inverse of
zero is zero.
As usual, we write $I(\Sigma,E)$ for the initial algebra among the
algebras over the signature $\Sigma$ that satisfy the equations $E$
(see e.g.~\cite{BT87a}).

The inversive meadow that we are interested in is $\Ratzi$, the
inversive meadow of rational numbers:
\begin{ldispl}
\Ratzi =
I(\sigimd,\eqnsimd \union
  \set{(1 + x^2 + y^2) \mmul (1 + x^2 + y^2)\minv = 1})\;.
\end{ldispl}
The divisive meadow that we are interested in is $\Ratzd$, the
divisive meadow of rational numbers:
\begin{ldispl}
\Ratzd =
I(\sigdmd,\eqnsdmd \union
  \set{(1 + x^2 + y^2) \mdiv (1 + x^2 + y^2) = 1})\;.
\end{ldispl}
$\Ratzd$ differs from $\Ratzi$ only in that the multiplicative inverse
operation is replaced by a division operation in conformity with the
projection $\imndmn$ defined in Section~\ref{sect-inv-div-meadows}.

To prove that $\Ratzi$ differs from the field of rational numbers only
in that the multiplicative inverse of zero is zero, we need some
auxiliary results.

\begin{lemma}
\label{lemma-primes}
Let $p$ be a prime number.
Then for each $u \in \bbZ_p$, there exists $v,w \in \bbZ_p$ such that
$u = v^2 + w^2$.
\end{lemma}
\begin{proof}
The case where $p = 2$ is trivial.
In the case where $p \neq 2$, $p$ is odd, say $2 \mmul n + 1$.
Let $S$ be the set
$\set{u \in \bbZ_p \where \Exists{v \in \bbZ_p}{u = v^2}}$,
and let $c \in \bbZ_p$ be such that $c \notin S$.
Because $0 \in \bbZ_p$ and each element of $S$ has at most two roots,
we have $|S| \geq n + 1$.
For each $u \in c \mmul S$, $u = 0$ or $u \notin S$, as $u \neq 0$ and
$u \in S$ only if $c \in S$.
Because $c \mmul u \neq c \mmul v$ for each $u,v \in S$ with $u \neq v$,
we have $|c \mmul S| \geq n + 1$.
It follows that $S \union c \mmul S = \bbZ_p$ and
$S \inter c \mmul S = \set{0}$.
This implies that
$c \mmul S =
 \set{u \in \bbZ_p \where \Forall{v \in \bbZ_p}{u \neq v^2}} \union
 \set{0}$.
Hence, for each $u \in \bbZ_p$ with $u \notin S$, there exists an
$v \in \bbZ_p$ such that $u = c \mmul v^2$.
The set $S$ is not closed under sums, as $1 \in S$, and every element of
$\bbZ_p$ is a sum of ones.
This implies that there exist $u,v \in \bbZ_p$ such that
$u^2 + v^2 \notin S$.
Let $a,b \in \bbZ_p$ be such that $a^2 + b^2 \notin S$, and
take $a^2 + b^2$ for $c$.
Then for each $u \in \bbZ_p$ with $u \notin S$, there exists an
$v \in \bbZ$ such that $u = (a^2 + b^2) \mmul v^2$.
Because $(a^2 + b^2) \mmul v^2 = (a \mmul v)^2 + (b \mmul v)^2$, we have
that, for each $u \in \bbZ_p$ with $u \notin S$, there exist
$v,w \in \bbZ$ such that $u = v^2 + w^2$.
Because $u \in S$ iff $u = v^2 + 0^2$ for some $v \in \bbZ_p$, we have
that, for each $u \in \bbZ_p$ with $u \in S$, there exist $v,w \in \bbZ$
such that $u = v^2 + w^2$.
\qed
\end{proof}

\begin{corollary}
\label{corollary-primes}
Let $p$ be a prime number.
Then there exists $u,v,w \in \Nat$ such that
$w \mmul p = u^2 + v^2 + 1$.
\end{corollary}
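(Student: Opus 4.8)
The plan is to obtain the corollary as an immediate consequence of Lemma~\ref{lemma-primes}. The key idea is that the lemma lets us represent \emph{any} element of $\bbZ_p$ as a sum of two squares, so I would apply it not to an arbitrary element but specifically to the element $-1$, i.e.\ the residue class of $p - 1$. This choice is exactly what makes the additive constant $1$ surface on the right-hand side of the target equation $w \mmul p = u^2 + v^2 + 1$.

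First I would invoke Lemma~\ref{lemma-primes} with the element $-1 \in \bbZ_p$ to obtain $a,b \in \bbZ_p$ with $a^2 + b^2 = -1$ in $\bbZ_p$, equivalently $a^2 + b^2 + 1 = 0$ in $\bbZ_p$. Next I would pass from the residue field back to the natural numbers: choosing the least nonnegative representatives $u,v \in \set{0,1,\ldots,p-1} \subseteq \Nat$ of $a$ and $b$, the equality $a^2 + b^2 + 1 = 0$ in $\bbZ_p$ reads as the divisibility statement that $p$ divides $u^2 + v^2 + 1$ in $\Nat$. Since $u^2 + v^2 + 1 \geq 1$, the corresponding quotient is a (positive) natural number $w$, and then $w \mmul p = u^2 + v^2 + 1$, which is precisely the required conclusion.

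There is essentially no obstacle to speak of here, since the statement is a direct corollary; the only point deserving attention is the transition between the two settings. Lemma~\ref{lemma-primes} is a statement about sums of two squares \emph{inside} the finite field $\bbZ_p$, whereas the corollary is a statement about natural numbers. I would handle this transition routinely by reading the equation in $\bbZ_p$ as a congruence modulo $p$ and then lifting $a$ and $b$ to their natural-number representatives, so that the ``$= 0$ in $\bbZ_p$'' becomes ``divisible by $p$ in $\Nat$''. No further work is needed beyond verifying that the lifted sum is strictly positive, which guarantees that $w$ can be taken in $\Nat$.
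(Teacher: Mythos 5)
Your proposal is correct and follows exactly the paper's argument: apply Lemma~\ref{lemma-primes} to the element $-1$ of $\bbZ_p$, lift the resulting representation $a^2 + b^2 = -1$ to natural-number representatives, and read off that $u^2 + v^2 + 1$ is a positive multiple of $p$. The paper's own proof is just a terser rendering of the same steps, so there is nothing to add.
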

\begin{proof}
By Lemma~\ref{lemma-primes}, there exist $u,v \in \bbZ_p$ such that
$-1 = u^2 + v^2$.
Let $a,b \in \bbZ_p$ be such that $-1 = a^2 + b^2$.
Then $a^2 + b^2 + 1$ is a multiple of $p$ in $\Nat$.
Hence, there exists $u,v,w \in \Nat$ such that
$w \mmul p = u^2 + v^2 + 1$.
\qed
\end{proof}

\begin{theorem}
\label{theorem-init-alg-spec-ratzi}
$\Ratzi$ is the zero-totalized field of rational numbers, i.e.\ the
$\sigimd$-algebra that differs from the field of rational numbers only
in that $0\minv = 0$.
\end{theorem}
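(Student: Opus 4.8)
The plan is to exhibit the unique homomorphism from the initial algebra $\Ratzi$ to the zero-totalized field of rational numbers and to show it is an isomorphism. Write $Q_0$ for the latter algebra and write $E$ for $\eqnsimd \union \set{(1 + x^2 + y^2) \mmul (1 + x^2 + y^2)\minv = 1}$, the defining equations of $\Ratzi$. First I would check that $Q_0 \models E$: the commutative-ring axioms, the reflexivity equation and the restricted inverse equation all hold by ordinary rational arithmetic together with the stipulation $0\minv = 0$, while the extra equation holds because $1 + x^2 + y^2 \geq 1 > 0$ for all rationals $x,y$, so the inverted subterm is never zero and its inverse is genuine. By initiality there is then a unique homomorphism $\morph{\phi}{\Ratzi}{Q_0}$, and $\phi$ is surjective because every rational $a/b$ is the value of the closed term $\ul{a} \mmul \ul{b}\minv$. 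What remains is injectivity: for closed $\sigimd$-terms $s$ and $t$ with the same value in $Q_0$, one must show $E \deriv s = t$.

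The decisive step, and the one I expect to be the main obstacle, is to strengthen the restricted inverse equation to the genuine inverse law on nonzero numerals, i.e.\ to derive $E \deriv \ul{n} \mmul \ul{n}\minv = 1$ for every $n \in \Nat \diff \set{0}$. This is exactly what Corollary~\ref{corollary-primes} is for. For a prime $p$ it yields $u,v,w \in \Nat$ with $w \mmul p = u^2 + v^2 + 1$; by standard numeral arithmetic (cf.\ Lemma~\ref{lemma-numerals}) this gives $\ul{w} \mmul \ul{p} = 1 + \ul{u}^2 + \ul{v}^2$, so instantiating the extra equation at $x := \ul{u}$, $y := \ul{v}$ yields $(\ul{w} \mmul \ul{p}) \mmul (\ul{w} \mmul \ul{p})\minv = 1$. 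Using the meadow identity $(x \mmul y)\minv = x\minv \mmul y\minv$ together with commutativity and associativity, this rearranges to $(\ul{w} \mmul \ul{w}\minv) \mmul (\ul{p} \mmul \ul{p}\minv) = 1$. Each factor $x \mmul x\minv$ is idempotent, since the restricted inverse equation gives $(x \mmul x\minv) \mmul (x \mmul x\minv) = x\minv \mmul (x \mmul (x \mmul x\minv)) = x\minv \mmul x = x \mmul x\minv$; and whenever a product $e \mmul f$ of idempotents equals $1$ one has $e = e \mmul (e \mmul f) = (e \mmul e) \mmul f = e \mmul f = 1$, and likewise $f = 1$. Hence $\ul{p} \mmul \ul{p}\minv = 1$. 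Factoring an arbitrary $n \geq 1$ into primes and appealing to multiplicativity of the inverse then propagates the inverse law to all nonzero numerals, and via $(- x)\minv = - x\minv$ to all nonzero integer numerals.

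With the inverse law in hand, the final step is a normal-form argument establishing the required completeness. I would fix, for each rational $q$, a canonical closed term $c_q$ --- namely $0$ when $q = 0$, and otherwise $\ul{a} \mmul \ul{b}\minv$ or $(- \ul{a}) \mmul \ul{b}\minv$ with $a,b \geq 1$ and $a/b$ in lowest terms --- and prove by induction on the structure of a closed term $t$ that $E \deriv t = c_q$, where $q$ is the value of $t$ in $Q_0$. In the inductive step the ring operations combine canonical fractions in the usual way, the derived equations $\ul{b} \mmul \ul{b}\minv = 1$ clearing the denominators; the inverse case uses $(\ul{a} \mmul \ul{b}\minv)\minv = \ul{b} \mmul \ul{a}\minv$ for $a \neq 0$ and collapses to $0$ for $a = 0$, matching the zero-totalization exactly. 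Since distinct rationals have distinct canonical terms, $s$ and $t$ having the same value $q$ in $Q_0$ forces $c_q$ to be one and the same term, whence $E \deriv s = c_q = t$. This gives injectivity of $\phi$ and therefore $\Ratzi \cong Q_0$. The whole weight of the argument rests on the passage from the single restricted equation to full invertibility of the numerals; once that is secured, the manipulation of fractions under the ring operations and the inverse is routine.
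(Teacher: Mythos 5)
Your proposal is correct and follows essentially the same route as the paper: both reduce the theorem to deriving invertibility of the nonzero numerals from the added two-squares equation via Corollary~\ref{corollary-primes} and Lemma~\ref{lemma-numerals}, with the remaining normal-form completeness argument being exactly what the paper delegates to Theorem~3.6 of~\cite{BT07a}. Your idempotent-based extraction of $\ul{p} \mmul \ul{p}\minv = 1$ from $(\ul{w} \mmul \ul{p}) \mmul (\ul{w} \mmul \ul{p})\minv = 1$ is a minor, valid variation on the paper's observation that $\ul{w} \mmul (\ul{w} \mmul \ul{p})\minv$ is an inverse of $\ul{p}$.
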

\begin{proof}
From the proof of Theorem~3.6 from~\cite{BT07a}, we already know that,
for each set $E'$ of $\sigimd$-equations valid in the zero-totalized
field of rational numbers, $I(\sigimd,\eqnsimd \union E')$ is the
zero-totalized field of rational numbers if it follows from
$\eqnsimd \union E'$ that $\ul{u}$ has a multiplicative inverse for each
$u \in \Nat \diff \set{0}$.
Because $1 + x^2 + y^2 \neq 0$, we have that
$(1 + x^2 + y^2) \mmul (1 + x^2 + y^2)\minv = 1$ is valid in the
zero-totalized field of rational numbers.
So it remains to be proved that $\ul{u}$ has a multiplicative inverse
for each $u \in \Nat \diff \set{0}$.

Let $p$ be a prime number.
Then, by Corollary~\ref{corollary-primes}, there exist $u,v,w \in \Nat$
such that $w \mmul p = u^2 + v^2 + 1$.
Let $m,a,b \in \Nat$ be such that $m \mmul p = a^2 + b^2 + 1$.
As a corollary of Lemma~\ref{lemma-numerals}, we have
$\ul{u + v} = \ul{u} + \ul{v}$ and
$\ul{u \mmul v} = \ul{u} \mmul \ul{v}$ for all $u,v \in \Nat$.
\linebreak[2]
It follows that $\ul{m} \mmul \ul{p} = \ul{a}^2 + \ul{b}^2 + \ul{1}$.
Because $(1 + x^2 + y^2) \mmul (1 + x^2 + y^2)\minv = 1$,\linebreak[2]
we have $(\ul{m} \mmul \ul{p}) \mmul (\ul{m} \mmul \ul{p})^{-1} = 1$.
This implies that $\ul{m} \mmul (\ul{m} \mmul \ul{p})^{-1}$ is the
multiplicative inverse of $\ul{p}$.
Hence, $\ul{u}$ has a multiplicative inverse for each
$u \in \Nat \diff \set{0}$ that is a prime number.
Let $c \in \Nat \diff \set{0}$.
Then $c$ is the product of finitely many prime numbers, say
$p_1 \mmul {} \cdots {} \mmul p_n$.
Because
$(\ul{p_1} \mmul {} \cdots {} \mmul \ul{p_n})\minv =
 \ul{p_1}\minv \mmul {} \cdots {} \mmul \ul{p_n}\minv$
(see e.g.\ Proposition~2.8 in~\cite{BHT09a}) and
$\ul{c} = \ul{p_1} \mmul {} \cdots {} \mmul \ul{p_n}$, we have that
$\ul{p_1}\minv \mmul {} \cdots {} \mmul \ul{p_n}\minv$
is the multiplicative inverse of $\ul{c}$.
Hence, $\ul{u}$ has a multiplicative inverse for each
$u \in \Nat \diff \set{0}$.
\qed
\end{proof}
Lemma~\ref{lemma-primes}, Corollary~\ref{corollary-primes}, and
Theorem~\ref{theorem-init-alg-spec-ratzi} come from Hirshfeld
(personal communication, 31 January 2009).
Lemma~\ref{lemma-primes} is a folk theorem in the area of field theory,
but we could not find a proof of it in the literature.

We remark that in~\cite{BT07a}, the initial algebra specification of
$\Ratzi$ is obtained by adding the equation
$(1+x^2+y^2+z^2+w^2) \mmul (1+x^2+y^2+z^2+w^2)\minv = 1$
instead of the equation $(1+x^2+y^2) \mmul (1+x^2+y^2)\minv = 1$ to
$\eqnsimd$.
In other words, in the current paper, we have reduced the number of
squares needed in the equation added to $\eqnsimd$ from $4$ to $2$.
In~\cite{BHT09b}, it is shown that the number of squares cannot be
reduced to $1$.

\section{Arithmetical Meadows of Rational Numbers}
\label{sect-arith-meadows-rat}

In this section, we obtain inverse and divisive arithmetical meadows of
rational numbers and inverse and divisive arithmetical meadows of
rational numbers with zero as initial algebras of finite equational
specifications.
Moreover, we prove that the inversive meadows in question are
subalgebras of reducts of the inversive meadow of rational numbers and
some results concerning the decidability of derivability from the
equational specifications concerned.

$\Ratia$, the inversive arithmetical meadow of rational numbers, is
defined as follows:
\begin{ldispl}
\Ratia = I(\sigiamd,\eqnsiamd)\;.
\end{ldispl}
$\Ratda$, the divisive arithmetical meadow of rational numbers, is
defined as follows:
\begin{ldispl}
\Ratda = I(\sigdamd,\eqnsdamd)\;.
\end{ldispl}
Notice that $\Ratia$ and $\Ratda$ are the initial algebras in the class
of inversive arithmetical meadows and the class of divisive arithmetical
meadows, respectively.

$\Ratia$ is a subalgebra of a reduct of $\Ratzi$.
\begin{theorem}
\label{theorem-init-alg-spec-ratia}
$\Ratia$ is the subalgebra of the $\sigiamd$-reduct of $\Ratzi$ whose
domain is the set of all positive rational numbers.
\end{theorem}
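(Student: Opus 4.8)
The plan is to exhibit the positive rationals, with the operations inherited from $\Ratzi$, as an inversive arithmetical meadow, and then to identify it with $\Ratia$ through the unique homomorphism supplied by initiality. Write $A$ for the subalgebra of the $\sigiamd$-reduct of $\Ratzi$ whose domain is the set of positive rational numbers. First I would check that $A$ is well defined: the positive rationals contain $1$ and are closed under $+$, $\mmul$ and ${}\minv$ (for positive $q$ we have $q\minv = 1 \mdiv q > 0$), so they form a subalgebra of the reduct. Since $\Ratzi$ satisfies $\eqnscr$ and every positive rational is nonzero, $A$ satisfies $\eqnsacr$ together with $x \mmul x\minv = 1$; that is, $A$ is an inversive arithmetical meadow. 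Moreover $A$ is minimal: iterating $+$ from $1$ yields every $\ul{n}$, hence every positive integer, and then $\ul{m} \mmul \ul{n}\minv$ realizes every positive rational, so the subalgebra generated by $1$ is all of $A$.

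By initiality of $\Ratia$ there is a unique homomorphism $\morph{h}{\Ratia}{A}$, and since $A$ is minimal its image is all of $A$, so $h$ is surjective. It remains to prove that $h$ is injective, for which I would establish a normal-form result: every closed $\sigiamd$-term $t$ satisfies $\eqnsiamd \deriv t = \ul{m} \mmul \ul{n}\minv$ for some $m,n \in \Nat \diff \set{0}$. This goes by induction on the structure of $t$. The base case $1$ equals $\ul{1} \mmul \ul{1}\minv$ (using $\ul{1}\minv = 1$, an easy consequence of $x \mmul 1 = x$ and $x \mmul x\minv = 1$). For the inductive cases, given representatives $\ul{m_1} \mmul \ul{n_1}\minv$ and $\ul{m_2} \mmul \ul{n_2}\minv$, I would use Lemma~\ref{lemma-numerals} to combine numerals and Lemma~\ref{lemma-mult-inv-ratia} to distribute the inverse over products, reducing a sum to $\ul{m_1 \mmul n_2 + m_2 \mmul n_1} \mmul \ul{n_1 \mmul n_2}\minv$, a product to $\ul{m_1 \mmul m_2} \mmul \ul{n_1 \mmul n_2}\minv$, and an inverse $(\ul{m} \mmul \ul{n}\minv)\minv$ to $\ul{n} \mmul \ul{m}\minv$. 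Each reduction is pure $\eqnsacr$-and-two-lemmas manipulation, with the axiom $x \mmul x\minv = 1$ letting the cross factors $\ul{n_i} \mmul \ul{n_i}\minv$ collapse to $1$.

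Finally I would close the injectivity argument. If two closed terms have equal value under $h$, then by the normal-form result they are $\eqnsiamd$-equal to $\ul{m_1} \mmul \ul{n_1}\minv$ and $\ul{m_2} \mmul \ul{n_2}\minv$ with equal value in $A$, i.e.\ $m_1 \mmul n_2 = m_2 \mmul n_1$ in $\Nat$. Then $\ul{m_1} \mmul \ul{n_2} = \ul{m_2} \mmul \ul{n_1}$ by Lemma~\ref{lemma-numerals}, and multiplying $\ul{m_1} \mmul \ul{n_1}\minv$ by $\ul{n_2} \mmul \ul{n_2}\minv = 1$, distributing the inverse, substituting, and collapsing $\ul{n_1} \mmul \ul{n_1}\minv = 1$ yields $\ul{m_2} \mmul \ul{n_2}\minv$. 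Hence the two terms coincide in $\Ratia$, so $h$ is injective and therefore an isomorphism, which is the assertion of the theorem.

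I expect the normal-form reduction to be the main obstacle, specifically verifying that the sum and inverse cases go through using only $\eqnsacr$ and the two lemmas, since the absence of both $0$ and the additive inverse operator forbids the usual $0$-based simplifications. Because Lemmas~\ref{lemma-numerals} and~\ref{lemma-mult-inv-ratia} have already been proved in the arithmetical setting, however, this amounts to careful equational bookkeeping rather than a genuinely new difficulty.
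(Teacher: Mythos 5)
Your proposal is correct and takes essentially the same route as the paper: the paper also reduces the theorem to a normal-form result for closed $\sigiamd$-terms, proved by induction on term structure using Lemmas~\ref{lemma-numerals} and~\ref{lemma-mult-inv-ratia}, with the initiality/minimality framework left implicit. The only cosmetic difference is that the paper normalizes to $\ul{n} \mmul \ul{m}\minv$ with $\gcd(n,m)=1$ so that uniqueness of the representative is syntactic, whereas you allow arbitrary $n,m$ and close the gap with an explicit cross-multiplication derivation; these are equivalent pieces of bookkeeping.
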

\begin{proof}
Like in the case of Theorem~3.1 from~\cite{BT07a}, it is sufficient to
prove that, for each closed term $t$ over the signature $\sigiamd$,
there exists a unique term $t'$ in the set
\begin{ldispl}
\set{\ul{n} \mmul \ul{m}\minv \where
n,m \in \Nat \diff \set{0} \And \nm{gcd}(n,m) = 1}
\end{ldispl}
such that $\eqnsiamd \deriv t = t'$.
Like in the case of Theorem~3.1 from~\cite{BT07a}, this is proved by
induction on the structure of $t$, using Lemmas~\ref{lemma-numerals}
and~\ref{lemma-mult-inv-ratia}.
The proof is similar, but simpler owing to:
(i)~the absence of terms of the forms $0$ and $-t'$;
(ii)~the absence of terms of the forms $\ul{0}$ and
$- (\ul{n} \mmul \ul{m}\minv)$ among the terms that exist by the
induction hypothesis;
(iii)~the presence of the axiom $x \mmul x\minv = 1$.
\qed
\end{proof}
The fact that $\Ratda$ is a subalgebra of a reduct of $\Ratzd$ is proved
similarly.

Derivability of equations from the equations of the initial algebra
specification of $\Ratia$ is decidable.
\begin{theorem}
\label{theorem-decidability-ratia}
For all $\sigiamd$-terms $t$ and $t'$, it is decidable whether
$\eqnsiamd \deriv t = t'$.
\end{theorem}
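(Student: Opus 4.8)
The plan is to reduce derivability of an equation between $\sigiamd$-terms to a purely syntactic comparison of polynomials with positive integer coefficients, and to show that this comparison is effective. Since $\Ratia$ is an inversive arithmetical meadow, $\eqnsiamd \deriv t = t'$ trivially implies that $t = t'$ holds in $\Ratia$; the real content is to establish the converse and to extract a decision procedure.

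First I would prove a normal form lemma: for every $\sigiamd$-term $t$ there are terms $P$ and $Q$, each a finite sum of products of variables (i.e.\ a polynomial with positive integer coefficients), such that $\eqnsiamd \deriv t = P \mmul Q\minv$. This is shown by induction on the structure of $t$, with base cases $x = x \mmul 1\minv$ and $1 = 1 \mmul 1\minv$. Products and inverses of two fractions $P_1 \mmul Q_1\minv$ and $P_2 \mmul Q_2\minv$ are brought back to fractional form using commutativity, associativity and distributivity together with the identities $(x\minv)\minv = x$ and $(x \mmul y)\minv = x\minv \mmul y\minv$ of Lemma~\ref{lemma-mult-inv-ratia}; for the inverse one uses $(P \mmul Q\minv)\minv = Q \mmul P\minv$. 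The decisive point is the addition step $P_1 \mmul Q_1\minv + P_2 \mmul Q_2\minv = (P_1 \mmul Q_2 + P_2 \mmul Q_1) \mmul (Q_1 \mmul Q_2)\minv$, whose derivation rests on $Q \mmul Q\minv = 1$ holding for the compound terms $Q_1,Q_2$. This is exactly what the universal axiom $x \mmul x\minv = 1$ of $\eqnsiamd$ supplies, and it is precisely this feature that makes the zero-free arithmetical setting behave like a field. Negative exponents arising along the way can be cleared by multiplying numerator and denominator by a common monomial.

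Next I would prove that $\eqnsiamd \deriv t = t'$ holds if and only if $P_t \mmul Q_{t'}$ and $P_{t'} \mmul Q_t$ denote the same formal polynomial, where $P_t \mmul Q_t\minv$ and $P_{t'} \mmul Q_{t'}\minv$ are the normal forms just obtained. For the ``if'' direction, a formal identity between positive-coefficient polynomials is derivable from $\eqnsacr$ alone, being a rearrangement by associativity, commutativity and distributivity (the coefficient arithmetic handled by Lemma~\ref{lemma-numerals}); from $P_t \mmul Q_{t'} = P_{t'} \mmul Q_t$ one then derives $t = t'$ by multiplying through by $Q_t\minv \mmul Q_{t'}\minv$ and applying $x \mmul x\minv = 1$. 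For the ``only if'' direction, if $\eqnsiamd \deriv t = t'$ then $t = t'$ holds in $\Ratia$, whose domain is the set of positive rational numbers by Theorem~\ref{theorem-init-alg-spec-ratia}; clearing denominators, which is legitimate because the values of $Q_t$ and $Q_{t'}$ are positive and hence nonzero, shows that $P_t \mmul Q_{t'}$ and $P_{t'} \mmul Q_t$ agree as functions on every tuple of positive rationals. Since the positive rationals are infinite, distinct monomials are linearly independent as functions on them, so polynomials agreeing there coincide as formal polynomials.

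Finally, the resulting criterion is effective: expanding $P_t \mmul Q_{t'}$ and $P_{t'} \mmul Q_t$ into canonical sums of monomials and comparing their coefficient multisets is a terminating computation, which settles decidability. I expect the main obstacle to be the normalisation lemma, specifically the addition step and the bookkeeping of exponents, rather than the concluding linear-independence argument, which is standard.
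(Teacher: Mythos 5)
Your proposal is correct and follows essentially the same route as the paper's proof: normalise every term to a fraction $P \mmul Q\minv$ of inverse-free polynomial terms with positive integer coefficients (via Lemma~\ref{lemma-mult-inv-ratia} and the axiom $x \mmul x\minv = 1$), cross-multiply, and decide by syntactic comparison of canonical polynomial forms, justified by the fact that positive-coefficient polynomials agreeing as functions on an infinite positive domain must coincide. The only differences are presentational: the paper performs the normalisation in two stages and appeals to the positive reals rather than your linear-independence argument over the positive rationals.
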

\begin{proof}
For each $\sigiamd$-term $r$, there exist $\sigiamd$-terms $r_1$ and
$r_2$ in which the multiplicative inverse operator do not occur such
that $\eqnsiamd \deriv r = r_1 \mmul r_2\minv$.
The proof of this fact is easy by induction on the structure of $r$,
using Lemma~\ref{lemma-mult-inv-ratia}.
Inspection of the proof yields that there is an effective way to find
witnessing terms.

For each closed $\sigiamd$-term $r$ in which the multiplicative inverse
operator does not occur there exists a $k \in \Nat \diff \set{0}$, such
that $\eqnsiamd \deriv r = \ul{k}$.
The proof of this fact is easy by induction on the structure of $r$.
Moreover, for each $\sigiamd$-term $r$ in which the multiplicative
inverse operator does not occur there exists a $\sigiamd$-term $r'$ of
the form
$\sum_{i_1=1}^{n_1} \ldots \sum_{i_m=1}^{n_m}
  \ul{k_{i_1 \ldots i_m}} \mmul
  x_1^{i_1} \mmul {} \cdots {} \mmul x_m^{i_m}$,
where $k_{i_1 \ldots i_m} \in \Nat \diff \set{0}$ for each
$i_1 \in [1,n_1]$, \ldots, $i_m \in [1,n_m]$ and $x_1,\ldots,x_m$ are
variables, such that $\eqnsiamd \deriv r = r'$.
The proof of this fact is easy by induction on the structure of $r$,
using the previous fact.
Inspection of the proof yields that there is an effective way to find
a witnessing term.
Terms of the form described above are polynomials in several variables
with positive integer coefficients.

Let $t_1$, $t_2$, $t'_1$, $t'_2$ be $\sigiamd$-terms in which the
multiplicative inverse operator do not occur such that
$\eqnsiamd \deriv t = t_1 \mmul {t_2}\minv$ and
$\eqnsiamd \deriv t' = t'_1 \mmul {t'_2}\minv$.
Moreover, let $s$ and $s'$ be $\sigiamd$-terms of the form
$\sum_{i_1=1}^{n_1} \ldots \sum_{i_m=1}^{n_m}
  \ul{k_{i_1 \ldots i_m}} \mmul
  x_1^{i_1} \mmul {} \cdots {} \mmul x_m^{i_m}$,
where $k_{i_1 \ldots i_m} \in \Nat \diff \set{0}$ for each
$i_1 \in [1,n_1]$, \ldots, $i_m \in [1,n_m]$ and $x_1,\ldots,x_m$ are
variables, such that $\eqnsiamd \deriv t_1 \mmul t'_2 = s$ and
$\eqnsiamd \deriv t'_1 \mmul t_2 = s'$.
We have that
$\eqnsiamd \deriv t = t'$ iff
$\eqnsiamd \deriv t_1 \mmul {t_2}\minv = t'_1 \mmul {t'_2}\minv$ iff
$\eqnsiamd \deriv t_1 \mmul t'_2 = t'_1 \mmul t_2$ iff
$\eqnsiamd \deriv s = s'$.
Moreover, we have that $\eqnsiamd \deriv s = s'$ only if $s$ and $s'$
denote the same function on positive real numbers in the inversive
arithmetical meadow of positive real numbers.
The latter is decidable because polynomials in several variables with
positive integer coefficients denote the same function on positive real
numbers in the inversive arithmetical meadow of positive real numbers
only if they are syntactically equal.
\qed
\end{proof}
The fact that derivability of equations from the equations of the
initial algebra specification of $\Ratda$ is decidable is proved
similarly.

$\Ratiaz$, the inversive arithmetical meadow of rational numbers with
zero, is defined as follows:
\begin{ldispl}
\Ratiaz =
I(\sigiamdz,\eqnsiamdz \union
            \set{(1 + x^2 + y^2) \mmul (1 + x^2 + y^2)\minv = 1})\;.
\end{ldispl}
$\Ratdaz$, the divisive arithmetical meadow of rational numbers with
zero, is defined as follows:
\begin{ldispl}
\Ratdaz =
I(\sigdamdz,\eqnsdamdz \union
            \set{(1 + x^2 + y^2) \mdiv (1 + x^2 + y^2) = 1})\;.
\end{ldispl}

$\Ratiaz$ is a subalgebra of a reduct of $\Ratzi$.
First we prove a fact that is useful in the proving this result.
\begin{lemma}
\label{lemma-mult-inv-exist-ratiaz}
It follows from
$\eqnsiamdz \union \set{(1 + x^2 + y^2) \mmul (1 + x^2 + y^2)\minv = 1}$
that $\ul{n}$ has a multiplicative inverse for each
$n \in \Nat \diff \set{0}$.
\end{lemma}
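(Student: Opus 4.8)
The plan is to mimic the proof of Theorem~\ref{theorem-init-alg-spec-ratzi}, taking care that the argument survives the passage to the weaker theory in which only $\eqnsiamdz$ and the added equation are available. Write $E$ for $\eqnsiamdz \union \set{(1 + x^2 + y^2) \mmul (1 + x^2 + y^2)\minv = 1}$. Saying that $\ul{n}$ has a multiplicative inverse amounts to exhibiting a term $t$ with $E \deriv \ul{n} \mmul t = 1$; multiplying the restricted inverse equation $\ul{n} \mmul (\ul{n} \mmul \ul{n}\minv) = \ul{n}$ by such a $t$ shows this to be equivalent to $E \deriv \ul{n} \mmul \ul{n}\minv = 1$, so I may use whichever of the two formulations is convenient.

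First I would settle the case that $n$ is a prime $p$. By Corollary~\ref{corollary-primes} there are $a,b,m \in \Nat$ with $m \mmul p = a^2 + b^2 + 1$. Using Lemma~\ref{lemma-numerals} to convert sums and products of naturals into sums and products of the corresponding numerals, one derives from $E$ the equation $\ul{m} \mmul \ul{p} = 1 + \ul{a}^2 + \ul{b}^2$. Substituting $\ul{a}$ for $x$ and $\ul{b}$ for $y$ in the added equation then gives $(\ul{m} \mmul \ul{p}) \mmul (\ul{m} \mmul \ul{p})\minv = 1$, and hence $\ul{p} \mmul (\ul{m} \mmul (\ul{m} \mmul \ul{p})\minv) = 1$ by commutativity and associativity of multiplication. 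Thus $\ul{p}$ has a multiplicative inverse, i.e.\ $E \deriv \ul{p} \mmul \ul{p}\minv = 1$.

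To reach an arbitrary $n \in \Nat \diff \set{0}$, I would factor $n$ as a product of primes $p_1 \mmul \cdots \mmul p_k$. Then $E \deriv \ul{n} = \ul{p_1} \mmul \cdots \mmul \ul{p_k}$ by Lemma~\ref{lemma-numerals}, and iterating Lemma~\ref{lemma-mult-inv-ratiaz} gives $E \deriv \ul{n}\minv = \ul{p_1}\minv \mmul \cdots \mmul \ul{p_k}\minv$. Rearranging the product by commutativity and associativity and applying the prime case factor by factor yields $\ul{n} \mmul \ul{n}\minv = (\ul{p_1} \mmul \ul{p_1}\minv) \mmul \cdots \mmul (\ul{p_k} \mmul \ul{p_k}\minv) = 1$, which completes the argument.

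The one step that needs attention --- and which, I expect, is the real reason the result is phrased for arithmetical meadows \emph{with zero} --- is the derivation of the numeral equation $\ul{m} \mmul \ul{p} = 1 + \ul{a}^2 + \ul{b}^2$. The integers $a$ and $b$ supplied by Corollary~\ref{corollary-primes} may be $0$, so this reduction genuinely relies on the constant $0$ together with the equation $x + 0 = x$ and the derived identity $0 \mmul x = 0$ of Lemma~\ref{lemma-zero-ratiaz} to treat the cases $a = 0$ or $b = 0$; none of these is available over the zero-free signature underlying $\eqnsiamd$. Once this numeral bookkeeping is in place, the remainder is routine equational reasoning.
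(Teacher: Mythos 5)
Your proof is correct and follows essentially the same route as the paper: the paper's own proof simply observes that the relevant part of the proof of Theorem~\ref{theorem-init-alg-spec-ratzi} (the prime case via Corollary~\ref{corollary-primes} and Lemma~\ref{lemma-numerals}, then prime factorization via $(x \mmul y)\minv = x\minv \mmul y\minv$) makes no use of the equation $x + (-x) = 0$ and hence goes through over $\eqnsiamdz$, which is exactly the argument you reconstruct in detail. Your closing remark correctly pinpoints the one place where the presence of $0$, $x + 0 = x$ and $0 \mmul x = 0$ (Lemma~\ref{lemma-zero-ratiaz}) is genuinely needed, namely when $a$ or $b$ supplied by Corollary~\ref{corollary-primes} is zero.
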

\begin{proof}
In the proof of Theorem~\ref{theorem-init-alg-spec-ratzi}, it is among
other things proved that it follows from
$\eqnsiamdz \union \set{x + (-x) = 0} \union
   \set{(1 + x^2 + y^2) \mmul (1 + x^2 + y^2)\minv = 1}$
that $\ul{n}$ has a multiplicative inverse for each
$n \in \Nat \diff \set{0}$.
The proof concerned goes through because no use is made of the equation
$x + (-x) = 0$.
\qed
\end{proof}
\begin{theorem}
\label{theorem-init-alg-spec-ratiaz}
$\Ratiaz$ is the subalgebra of the $\sigiamdz$-reduct of $\Ratzi$ whose
domain is the set of all non-negative rational numbers.
\end{theorem}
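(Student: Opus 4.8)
The plan is to follow the pattern of the proof of Theorem~\ref{theorem-init-alg-spec-ratia} and reduce the statement to a normal-form result. Writing $E$ for the equations $\eqnsiamdz \union \set{(1 + x^2 + y^2) \mmul (1 + x^2 + y^2)\minv = 1}$ of the specification of $\Ratiaz$, it is sufficient to show that every closed $\sigiamdz$-term $t$ is provably equal from $E$ to exactly one term in the set
\[
\set{0} \union
\set{\ul{n} \mmul \ul{m}\minv \where
     n,m \in \Nat \diff \set{0} \And \nm{gcd}(n,m) = 1}\;,
\]
since these terms are in one-to-one correspondence with the non-negative rational numbers ($0$ with $0$ and $\ul{n} \mmul \ul{m}\minv$ with $n/m$).

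For existence of such a normal form I would first apply Lemma~\ref{lemma-zero-elim-ratiaz}: either $\eqnsiamdz \deriv t = 0$, in which case $0$ is the required term, or $\eqnsiamdz \deriv t = t''$ for some closed $\sigiamd$-term $t''$. In the latter case I would run the induction from the proof of Theorem~\ref{theorem-init-alg-spec-ratia} on $t''$ to obtain coprime $n,m$ with $E \deriv t'' = \ul{n} \mmul \ul{m}\minv$, relying on Lemma~\ref{lemma-numerals} to combine numerals and on Lemma~\ref{lemma-mult-inv-ratiaz} to distribute the inverse over products.

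For uniqueness I would invoke the subalgebra $A$ of the $\sigiamdz$-reduct of $\Ratzi$ whose domain is the set of non-negative rational numbers. This is a genuine subalgebra because the non-negative rationals contain $0$ and $1$ and are closed under $+$, $\mmul$ and $\minv$ (using $0\minv = 0$), and it satisfies $E$ because $\Ratzi$ does (Theorem~\ref{theorem-init-alg-spec-ratzi}) and equations are inherited by subalgebras. Since $0$ and the terms $\ul{n} \mmul \ul{m}\minv$ with $\nm{gcd}(n,m) = 1$ denote pairwise distinct elements of $A$, and provable equality from $E$ is sound for $A$, no two of the candidate normal forms are provably equal. Together with existence and the fact that $A$ is reachable by exactly these terms, this yields $\Ratiaz \cong A$.

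The main obstacle I anticipate is that the unconditional general inverse law $x \mmul x\minv = 1$, which drives the reduction to lowest terms in the proof of Theorem~\ref{theorem-init-alg-spec-ratia}, is \emph{not} available here, since it fails at $0$ in $\Ratiaz$. The point to verify carefully is that this law is used in that proof only through its numeral instances $\ul{k} \mmul \ul{k}\minv = 1$ with $k \in \Nat \diff \set{0}$ — when clearing denominators, adding fractions, and cancelling a common factor — and that each such instance is derivable from $E$ by Lemma~\ref{lemma-mult-inv-exist-ratiaz}. Once this is confirmed, the positive-case computation transfers to the closed $\sigiamd$-term $t''$ without change, and the remainder of the argument is routine.
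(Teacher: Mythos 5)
Your proposal is correct and follows essentially the same route as the paper: reduce to existence and uniqueness of normal forms in $\set{\ul{0}} \union \set{\ul{n} \mmul \ul{m}\minv \where \nm{gcd}(n,m) = 1}$, prove existence by the structural induction of Theorem~\ref{theorem-init-alg-spec-ratia} adapted via Lemmas~\ref{lemma-numerals}, \ref{lemma-zero-ratiaz} and~\ref{lemma-mult-inv-ratiaz}, and compensate for the missing axiom $x \mmul x\minv = 1$ by its numeral instances supplied by Lemma~\ref{lemma-mult-inv-exist-ratiaz}, exactly as the paper does. The only (harmless) organizational difference is that you eliminate occurrences of $0$ up front via Lemma~\ref{lemma-zero-elim-ratiaz}, whereas the paper carries $\ul{0}$ as an extra normal form through the induction.
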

\begin{proof}
Like in the case of Theorem~\ref{theorem-init-alg-spec-ratia}, it is
sufficient to prove that, for each closed term $t$ over the signature
$\sigiamd$, there exists a unique term $t'$ in the set
\begin{ldispl}
\set{\ul{0}} \union
\set{\ul{n} \mmul \ul{m}\minv \where
     n,m \in \Nat \diff \set{0} \And \nm{gcd}(n,m) = 1}
\end{ldispl}
such that
$\eqnsiamdz \union
 \set{(1 + x^2 + y^2) \mmul (1 + x^2 + y^2)\minv = 1} \deriv t = t'$.
Like in the case of Theorem~\ref{theorem-init-alg-spec-ratia}, this is
proved by induction on the structure of $t$, now using
Lemmas~\ref{lemma-numerals}, \ref{lemma-zero-ratiaz},
and~\ref{lemma-mult-inv-ratiaz}.
The proof is similar, but more complicated owing to:
(i)~the presence of terms of the form $0$;
(ii)~the presence of terms of the form $\ul{0}$ among the terms that
exist by the induction hypothesis;
(iii)~the absence of the axiom $x \mmul x\minv = 1$.
Because of the last point, use is made of
Lemma~\ref{lemma-mult-inv-exist-ratiaz}.
\qed
\end{proof}
The fact that $\Ratdaz$ is a subalgebra of a reduct of $\Ratzd$ is
proved similarly.

An alternative initial algebra specification of $\Ratiaz$ is obtained if
the equation $(1 + x^2 + y^2) \mmul (1 + x^2 + y^2)\minv = 1$ is
replaced by
$(x \mmul (x + y)) \mmul (x \mmul (x + y))\minv = x \mmul x\minv$.
\begin{theorem}
\label{theorem-alt-init-alg-spec-ratiaz}
$\Ratiaz \cong
 I(\sigiamdz,
   \eqnsiamdz \union
   \set{(x \mmul (x + y)) \mmul (x \mmul (x + y))\minv =
        x \mmul x\minv})$.
\end{theorem}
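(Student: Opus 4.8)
The plan is to exploit that the two algebras are the initial algebras of two equational specifications over the one signature $\sigiamdz$. Write $B$ for $I(\sigiamdz,\eqnsiamdz \union \set{(x \mmul (x + y)) \mmul (x \mmul (x + y))\minv = x \mmul x\minv})$, the algebra on the right-hand side of the statement, and recall that $\Ratiaz = I(\sigiamdz,\eqnsiamdz \union \set{(1 + x^2 + y^2) \mmul (1 + x^2 + y^2)\minv = 1})$. Initial algebras over a common signature are isomorphic as soon as their defining equations have the same closed consequences, and I would establish this by exhibiting a homomorphism in each direction; being homomorphisms out of initial algebras, they are automatically mutually inverse. For this it is enough to prove two things: (i)~the equation $(1 + x^2 + y^2) \mmul (1 + x^2 + y^2)\minv = 1$ is derivable from $\eqnsiamdz \union \set{(x \mmul (x + y)) \mmul (x \mmul (x + y))\minv = x \mmul x\minv}$, so that $B$ is a model of the specification of $\Ratiaz$ and initiality of $\Ratiaz$ yields a unique homomorphism $\Ratiaz \to B$; and (ii)~$\Ratiaz$ itself satisfies $(x \mmul (x + y)) \mmul (x \mmul (x + y))\minv = x \mmul x\minv$, so that $\Ratiaz$ is a model of the specification of $B$ and initiality of $B$ yields a unique homomorphism $B \to \Ratiaz$.

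For step (i) I would substitute $x := 1$ into the extra axiom of $B$. The restricted inverse equation gives $1\minv = 1$, and with the identity law $1 \mmul z = z$ the instance collapses to $(1 + y) \mmul (1 + y)\minv = 1$, an equation in the single variable $y$. Substituting $y := x^2 + y^2$ then produces $(1 + x^2 + y^2) \mmul (1 + x^2 + y^2)\minv = 1$, as required.

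For step (ii) I would appeal to Theorem~\ref{theorem-init-alg-spec-ratiaz}, which identifies $\Ratiaz$ with the subalgebra of the $\sigiamdz$-reduct of $\Ratzi$ whose domain is the set of non-negative rational numbers. In this concrete algebra $x \mmul x\minv$ evaluates to $1$ on every nonzero element and to $0$ on $0$. If $x = 0$ both sides of the equation are $0$; if $x \neq 0$, then $x$ is a positive rational and, since $y \geq 0$, we have $x + y \geq x > 0$, so $x \mmul (x + y) \neq 0$ and both sides equal $1$. Hence $\Ratiaz$ satisfies the axiom. Composing the two homomorphisms and using uniqueness of homomorphisms out of initial algebras gives $\Ratiaz \cong B$.

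I expect the only genuine content to lie in these two verifications, and the main point to keep in mind is that one should \emph{not} attempt to prove the two specifications equationally equivalent: deriving $(x \mmul (x + y)) \mmul (x \mmul (x + y))\minv = x \mmul x\minv$ from $\eqnsiamdz \union \set{(1 + x^2 + y^2) \mmul (1 + x^2 + y^2)\minv = 1}$ looks considerably harder and is in fact unnecessary, since only agreement on closed equations is needed and the initial-algebra argument supplies exactly that. The reliance on non-negativity in step (ii) is essential: over the full meadow $\Ratzi$ the equation fails, as witnessed by $x = 1$, $y = -1$, where $x \mmul (x + y) = 0$ but $x \mmul x\minv = 1$.
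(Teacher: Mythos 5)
Your proposal is correct and follows essentially the same route as the paper: both reduce the isomorphism to checking that each specification's extra axiom is valid in the other's initial algebra, obtain $(1 + x^2 + y^2) \mmul (1 + x^2 + y^2)\minv = 1$ by substituting $1$ for $x$ and $x^2 + y^2$ for $y$ in the new axiom, and verify the new axiom in $\Ratiaz$ by the case split $x = 0$ versus $x \neq 0$, where non-negativity of the domain gives $x + y \neq 0$. The only cosmetic difference is that the paper first rewrites the new axiom as $(x \mmul x\minv) \mmul ((x + y) \mmul (x + y)\minv) = x \mmul x\minv$ via Lemma~\ref{lemma-mult-inv-ratiaz} before carrying out both steps, whereas you work with the axiom directly and evaluate it in the concrete description of $\Ratiaz$ supplied by Theorem~\ref{theorem-init-alg-spec-ratiaz}.
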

\begin{proof}
It is sufficient to prove that
$(x \mmul (x + y)) \mmul (x \mmul (x + y))\minv = x \mmul x\minv$
is valid in $\Ratiaz$ and
$(1 + x^2 + y^2) \mmul (1 + x^2 + y^2)\minv = 1$ is valid in
$I(\sigiamdz,
   \eqnsiamdz \union
   \set{(x \mmul (x + y)) \mmul (x \mmul (x + y))\minv =
        x \mmul x\minv})$.
It follows from Lemma~\ref{lemma-mult-inv-ratiaz}, and the
associativity and commutativity of ${}\mmul{}$, that
$(x \mmul (x + y)) \mmul (x \mmul (x + y))\minv = x \mmul x\minv \Iff
 (x \mmul x\minv) \mmul ((x + y) \mmul (x + y)\minv) =
 x \mmul x\minv$ is derivable from $\eqnsiamdz$.
This implies that
$(x \mmul (x + y)) \mmul (x \mmul (x + y))\minv = x \mmul x\minv$
is valid in $\Ratiaz$ iff
$(x \mmul x\minv) \mmul ((x + y) \mmul (x + y)\minv) = x \mmul x\minv$
is valid in $\Ratiaz$.
The latter is easily established by distinction
between the cases $x = 0$ and $x \neq 0$.
To show that
$(1 + x^2 + y^2) \mmul (1 + x^2 + y^2)\minv = 1$ is valid in
$I(\sigiamdz,
   \eqnsiamdz \union
   \set{(x \mmul (x + y)) \mmul (x \mmul (x + y))\minv =
        x \mmul x\minv})$,
it is sufficient to derive
$(1 + x^2 + y^2) \mmul (1 + x^2 + y^2)\minv = 1$ from
$\eqnsiamdz \union
 \set{(x \mmul x\minv) \mmul ((x + y) \mmul (x + y)\minv) =
      x \mmul x\minv}$.
The derivation is fully trivial with the exception of the first step,
viz.\ substituting $1$ for $x$ and $x^2 + y^2$ for $y$ in
$(x \mmul x\minv) \mmul ((x + y) \mmul (x + y)\minv) = x \mmul x\minv$.
\qed
\end{proof}
An alternative initial algebra specification of $\Ratdaz$ is obtained in
the same vein.

In $\Ratiaz$, the \emph{general inverse law}
$x \neq 0 \Implies x \mmul x\minv = 1$ is valid.
Derivability of equations from the equations of the alternative initial
algebra specification of $\Ratiaz$ and the general inverse law is
decidable.
First we prove a fact that is useful in proving this decidability
result.
\begin{lemma}
\label{lemma-derivability-gil}
For all $\sigiamd$-terms $t$ in which no other variables than
$x_1,\ldots,x_n$ occur, 
$\eqnsiamdz \union
 \set{(x \mmul (x + y)) \mmul (x \mmul (x + y))\minv =
      x \mmul x\minv} \union
 \set{x_1 \mmul x_1\minv = 1,\ldots,\linebreak[2]x_n \mmul x_n\minv = 1}
  \deriv_{x_1,\ldots,x_n} t \mmul t\minv = 1$.
\end{lemma}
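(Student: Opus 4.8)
The plan is to proceed by induction on the structure of the $\sigiamd$-term $t$, treating $x_1,\ldots,x_n$ as constants throughout, as the notation $\deriv_{x_1,\ldots,x_n}$ dictates. (This is essential: if the $x_i$ could be substituted, then $x_i \mmul x_i\minv = 1$ together with $x_i := 0$ would yield $0 = 1$.) Write $E$ for the full set of equations to the left of the turnstile. Since $t$ is a $\sigiamd$-term, each of its subterms is again a $\sigiamd$-term over $x_1,\ldots,x_n$, so the induction hypothesis is available at every subterm with the same hypothesis set $E$. For the base cases: if $t = x_i$, then $t \mmul t\minv = 1$ is literally the hypothesis $x_i \mmul x_i\minv = 1 \in E$; if $t = 1$, I would first derive $1\minv = 1$ from $\eqnsiamdz$ by instantiating the restricted inverse equation $x \mmul (x \mmul x\minv) = x$ at $x = 1$ and simplifying with $x \mmul 1 = x$ and commutativity, whence $1 \mmul 1\minv = 1 \mmul 1 = 1$.

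For the inductive step there are three cases. If $t = t_1\minv$, then the reflexivity equation ${(x\minv)}\minv = x$ (which lies in $\eqnsinv \subseteq \eqnsiamdz$) together with commutativity gives $t \mmul t\minv = t_1\minv \mmul {(t_1\minv)}\minv = t_1 \mmul t_1\minv$, which is $1$ by the induction hypothesis on $t_1$. If $t = t_1 \mmul t_2$, then Lemma~\ref{lemma-mult-inv-ratiaz} provides $(t_1 \mmul t_2)\minv = t_1\minv \mmul t_2\minv$, so rearranging by associativity and commutativity yields $t \mmul t\minv = (t_1 \mmul t_1\minv) \mmul (t_2 \mmul t_2\minv)$, and both factors reduce to $1$ by the induction hypothesis on $t_1$ and $t_2$.

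The crux is the case $t = t_1 + t_2$, and it is the only place where the extra axiom $(x \mmul (x + y)) \mmul (x \mmul (x + y))\minv = x \mmul x\minv$ is needed; intuitively this axiom expresses that invertibility of $x$ is inherited by $x + y$. I would instantiate it with $x := t_1$ and $y := t_2$, obtaining $(t_1 \mmul (t_1 + t_2)) \mmul (t_1 \mmul (t_1 + t_2))\minv = t_1 \mmul t_1\minv$. Applying Lemma~\ref{lemma-mult-inv-ratiaz} to rewrite $(t_1 \mmul (t_1 + t_2))\minv$ as $t_1\minv \mmul (t_1 + t_2)\minv$ and rearranging by associativity and commutativity, the left-hand side becomes $(t_1 \mmul t_1\minv) \mmul ((t_1 + t_2) \mmul (t_1 + t_2)\minv)$. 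Since $t_1 \mmul t_1\minv = 1$ by the induction hypothesis on $t_1$, and the right-hand side $t_1 \mmul t_1\minv$ is likewise $1$, the equation collapses to $(t_1 + t_2) \mmul (t_1 + t_2)\minv = 1$, i.e.\ $t \mmul t\minv = 1$.

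I expect the main obstacle to be conceptual rather than computational: realising that the additive case cannot be handled by the generic meadow identities, because addition does not interact nicely with ${}\minv$, and that the tailored axiom $(x \mmul (x + y)) \mmul (x \mmul (x + y))\minv = x \mmul x\minv$ is exactly the ingredient required. Once the right instantiation is fixed, the argument reduces to the distributive splitting of Lemma~\ref{lemma-mult-inv-ratiaz} and the induction hypothesis on $t_1$ alone; notably the hypothesis on $t_2$ is not used, which mirrors the fact that in the intended model a single positive summand already forces the sum to be positive, and hence invertible.
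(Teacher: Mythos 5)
Your proof is correct and follows exactly the route the paper indicates: the paper's entire proof is ``easy by induction on the structure of $t$, using Lemma~\ref{lemma-mult-inv-ratiaz}'', and your case analysis (with the added axiom instantiated at $x := t_1$, $y := t_2$ in the addition case, split via Lemma~\ref{lemma-mult-inv-ratiaz}) is the natural filling-in of that outline.
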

\begin{proof}
The proof is easy by induction on the structure of $t$, using
Lemma~\ref{lemma-mult-inv-ratiaz}.
\qed
\end{proof}

\begin{theorem}
\label{theorem-decidability-ratiaz}
For all $\sigiamdz$-terms $t$ and $t'$, it is decidable whether
$\eqnsiamdz \union \linebreak
 \set{(x \mmul (x + y)) \mmul (x \mmul (x + y))\minv =
      x \mmul x\minv} \union
 \set{x \neq 0 \Implies x \mmul x\minv = 1} \deriv t = t'$.
\end{theorem}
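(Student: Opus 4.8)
The plan is to mirror the decision procedure of Theorem~\ref{theorem-decidability-ratia} for the zero-free case, but to precede it by a finite case analysis on the zero-pattern of the variables, which is what lets the conditional general inverse law be replaced by unconditional hypotheses. Write $T$ for the conditional theory in the statement and let $x_1,\ldots,x_n$ be the variables occurring in $t$ or $t'$. For a subset $Z \subseteq \set{x_1,\ldots,x_n}$ write $t[Z]$ for the result of substituting $0$ for every variable in $Z$, read the remaining variables as fresh constants, and let $T_Z$ be the unconditional theory $\eqnsiamdz \union \set{(x \mmul (x + y)) \mmul (x \mmul (x + y))\minv = x \mmul x\minv} \union \set{x_i \mmul x_i\minv = 1 \where x_i \notin Z}$, in the spirit of the derivability $\deriv_{x_1,\ldots,x_n}$ of Lemma~\ref{lemma-derivability-gil}. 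The argument rests on the \emph{splitting equivalence}: $T \deriv t = t'$ holds iff $T_Z \deriv t[Z] = t'[Z]$ for every $Z \subseteq \set{x_1,\ldots,x_n}$.

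I would establish the splitting equivalence by passing through validity in $\Ratiaz$. Soundness of $T$ with respect to $\Ratiaz$ is immediate, since $\Ratiaz$ satisfies the equational axioms and validates the general inverse law (as noted just before the theorem); the converse, that $T \deriv t = t'$ whenever $t = t'$ holds throughout $\Ratiaz$, is a genericity statement for $\Ratiaz$ among cancellation arithmetical meadows with zero, and proving it is the step I expect to be the main obstacle, for it is exactly here that the conditional nature of the general inverse law must be tamed. Granting such a genericity result for $T$ and, likewise, for each $T_Z$, the splitting equivalence becomes transparent: every assignment of non-negative rationals is classified by its zero-pattern $Z$, so validity of $t = t'$ across all assignments is the conjunction over the finitely many $Z$ of validity of $t[Z] = t'[Z]$ across assignments sending the variables of $Z$ to $0$ and the rest to positive rationals, and the latter is precisely validity in the models of $T_Z$.

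Each of the finitely many $T_Z$ is then decided as follows. After substituting $0$ for the variables of $Z$, Lemma~\ref{lemma-zero-ratiaz} and Lemma~\ref{lemma-zero-elim-ratiaz} show that each of $t[Z]$ and $t'[Z]$ is provably equal either to $0$ or to a $\sigiamd$-term in the constants $\set{x_i \where x_i \notin Z}$, and inspection of the structural proof of Lemma~\ref{lemma-zero-elim-ratiaz} determines which alternative holds effectively. The two alternatives are mutually exclusive: interpreting the constants as distinct positive rationals gives a model of $T_Z$ in which every $\sigiamd$-term takes a positive, hence nonzero, value, so no $\sigiamd$-term is provably $0$. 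Consequently, if the two sides fall on different alternatives the equation fails, and if both are provably $0$ it holds trivially.

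In the remaining subcase both sides reduce to $\sigiamd$-terms $s$ and $s'$ over the constants not in $Z$, with $T_Z$ supplying $x_i \mmul x_i\minv = 1$ for each such constant. This is precisely the setting of the zero-free inversive arithmetical meadow: Lemma~\ref{lemma-derivability-gil} upgrades these per-constant invertibilities to the invertibility of every denominator built from the constants, so Lemma~\ref{lemma-mult-inv-ratiaz} lets me write $s = s_1 \mmul s_2\minv$ and $s' = s'_1 \mmul s'_2\minv$ with inverse-free numerators and denominators and reduce $s = s'$ to the cross-multiplied equation $s_1 \mmul s'_2 = s'_1 \mmul s_2$. That question is settled exactly as in Theorem~\ref{theorem-decidability-ratia}, since inverse-free $\sigiamd$-terms reduce to polynomials in several variables with positive integer coefficients and two such polynomials are provably equal iff they are syntactically equal. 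As there are only $2^n$ subsets $Z$ and each is disposed of by this procedure, it is decidable whether $T \deriv t = t'$.
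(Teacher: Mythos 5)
Your overall decomposition is the right one and, once unrolled, coincides with the paper's: the paper proceeds by induction on the number of variables, at each step conjoining the instances $s\subst{0}{x_i} = s'\subst{0}{x_i}$ (handled recursively) with the case in which all remaining variables are assumed invertible, the latter being collapsed to $\eqnsiamd \deriv s = s'$ via Lemma~\ref{lemma-derivability-gil} and decided by Theorem~\ref{theorem-decidability-ratia}; your $2^n$ subsets $Z$ are exactly the leaves of that recursion. Your treatment of the individual cases (Lemma~\ref{lemma-zero-elim-ratiaz} to eliminate $0$, the positivity argument separating $\sigiamd$-terms from $0$, cross-multiplication and polynomial normal forms) also matches the paper.

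The gap is in how you justify the splitting equivalence. You route it through validity in $\Ratiaz$ and therefore need the ``genericity'' statement that $\Ratiaz \models t = t'$ implies $T \deriv t = t'$ for open equations; you flag this as the main obstacle and then grant it. That is precisely the step that cannot be granted: $\omega$-completeness of these specifications is listed among the open questions at the end of Section~\ref{sect-arith-meadows-rat}, so completeness of $T$ with respect to $\Ratiaz$ for equations containing variables is not available off the shelf, and your procedure would only be proved correct conditionally on it. The paper does not need any such result, because the splitting is established proof-theoretically rather than semantically: since $T$ contains the conditional axiom $x \neq 0 \Implies x \mmul x\minv = 1$, the derivation system itself licenses the case distinction $x_i = 0$ versus $x_i \neq 0$, and $T \deriv s = s'$ is shown directly equivalent to the conjunction of the substituted instances (fewer variables, hence handled by induction) and the all-variables-invertible case. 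Validity in $\Ratiaz$ is invoked only for closed equations, where completeness is automatic from initiality and the unique normal forms supplied by the proof of Theorem~\ref{theorem-init-alg-spec-ratiaz}. If you replace your semantic justification of the splitting equivalence by this proof-theoretic case analysis, the rest of your argument goes through and is essentially the paper's.
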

\begin{proof}
Let $\eqnsiamdzx =
 \eqnsiamdz \union
 \set{(x \mmul (x + y)) \mmul (x \mmul (x + y))\minv =
      x \mmul x\minv} \union \linebreak
 \set{x \neq 0 \Implies x \mmul x\minv = 1}$.
We prove that $\eqnsiamdzx \deriv t = t'$ is decidable by induction on
the number of variables occurring in $t = t'$.
In the case where the number of variables is $0$, we have that
$\eqnsiamdzx \deriv t = t'$ iff $\Ratiaz \models t = t'$ iff
$\eqnsiamdz \union
 \set{(1 + x^2 + y^2) \mmul (1 + x^2 + y^2)\minv = 1} \deriv t = t'$.
The last is decidable because, by the proof of
Theorem~\ref{theorem-init-alg-spec-ratiaz}, there exist
unique terms $s$ and $s'$ in the set
$\set{\ul{0}} \union
 \set{\ul{n} \mmul \ul{m}\minv \where
      n,m \in \Nat \diff \set{0} \And \nm{gcd}(n,m) = 1}$
such that
$\eqnsiamdz \union
 \set{(1 + x^2 + y^2) \mmul (1 + x^2 + y^2)\minv = 1} \deriv t = s$ and
$\eqnsiamdz \union
 \set{(1 + x^2 + y^2) \mmul \linebreak[2] (1 + x^2 + y^2)\minv = 1}
  \deriv t' = s'$,
and inspection of that proof yields that there is an effective way to
find $s$ and $s'$.
Hence, in the case where the number of variables is $0$,
$\eqnsiamdzx \deriv t = t'$ is decidable.
In the case where the number of variables is $n + 1$, suppose that the
variables are $x_1,\ldots,x_{n+1}$.
Let $s$ be such that $\eqnsiamdz \deriv t = s$ and $s$ is either a
$\sigiamd$-term or the constant $0$ and let $s'$ be such that
$\eqnsiamdz \deriv t' = s'$ and $s'$ is either a $\sigiamd$-term or the
constant $0$.
Such $s$ and $s'$ exist by Lemma~\ref{lemma-zero-elim-ratiaz}, and
inspection of the proof of that lemma yields that there is an effective
way to find $s$ and $s'$.
We have that $\eqnsiamdzx \deriv t = t'$ iff
$\eqnsiamdzx \deriv s = s'$.
In the case where not both $s$ and $s'$ are $\sigiamd$-terms,
$\eqnsiamdzx \deriv s = s'$ only if $s$ and $s'$ are syntactically
equal.
Hence, in this case, $\eqnsiamdzx \deriv t = t'$ is decidable.
In the case where both $s$ and $s'$ are $\sigiamd$-terms, by the general
inverse law, we have that
$\eqnsiamdzx \deriv s = s'$ iff
$\eqnsiamdzx \deriv s\subst{0}{x_i} =
 s'\subst{0}{x_i}$
for all $i \in [1,n + 1]$ and
$\eqnsiamdzx \union
 \set{x_1 \mmul x_1\minv = 1,\ldots,x_{n+1} \mmul x_{n+1}\minv = 1}
  \deriv_{x_1,\ldots,x_{n+1}} s = s'$.
By Lemma~\ref{lemma-derivability-gil}, we have that
$\eqnsiamdzx \union
 \set{x_1 \mmul x_1\minv = 1,\ldots,x_{n+1} \mmul x_{n+1}\minv = 1}
  \deriv_{x_1,\ldots,x_{n+1}} s = s'$ iff
$\eqnsiamd \deriv s = s'$.
For each $i \in [1,n + 1]$,
$\eqnsiamdzx \deriv s\subst{0}{x_i} =
 s'\subst{0}{x_i}$
is decidable because the number of variables occurring in
$s\subst{0}{x_i} = s'\subst{0}{x_i}$ is $n$.
Moreover, we know from Theorem~\ref{theorem-decidability-ratia} that
$\eqnsiamd \deriv s = s'$ is decidable.
Hence, in the case where both $s$ and $s'$ are $\sigiamd$-terms,
$\eqnsiamdzx \deriv t = t'$ is decidable as well.
\qed
\end{proof}
The fact that derivability of equations from the equations of the
alternative initial algebra specification of $\Ratdaz$ and
$x \neq 0 \Implies x \mdiv x = 1$ is decidable is proved similarly.
It is an open problem whether derivability of equations from the
equations of the alternative initial algebra specifications of $\Ratiaz$
and $\Ratdaz$ is decidable.

The following are some outstanding questions with regard to arithmetical
meadows:
\begin{enumerate}
\item
Is the initial algebra specification of $\Ratzi$ a conservative
extension of the initial algebra specifications of $\Ratia$ and
$\Ratiaz$?
\item
Do $\Ratia$ and $\Ratiaz$ have initial algebra specifications that
constitute complete term rewriting systems (modulo associativity and
commutativity of ${}+{}$ and~${}\mmul{}$)?
\item
Do $\Ratia$ and $\Ratiaz$ have $\omega$-complete initial algebra
specifications?
\item
What are the complexities of derivability of equations from
$\eqnsiamd$ and
$\eqnsiamdz \union
 \set{(x \mmul (x + y)) \mmul (x \mmul (x + y))\minv =
      x \mmul x\minv,\;
      x \neq 0 \Implies x \mmul x\minv = 1}$?
\item
Is derivability of equations from
$\eqnsiamdz \union
 \set{(x \mmul (x + y)) \mmul (x \mmul (x + y))\minv =
      x \mmul x\minv} \deriv t = t'$ decidable?
\item
Do we have
$\Ratiaz \cong
 I(\sigiamdz,
   \eqnsiamdz \union \set{(1 + x^2) \mmul (1 + x^2)\minv = 1})$?
\end{enumerate}
These questions are formulated for the inversive case, but they have
counterparts for the divisive case of which some might lead to different
answers.

\section{Partial Meadows}
\label{sect-partial-meadows}

In this section, we introduce simple constructions of partial inversive
and divisive meadows from total ones.
Divisive meadows are more basic than inversive meadows if the partial
ones are considered as well.

We take the position that partial algebras should be made from total
ones.
For the particular case of meadows, this implies that relevant partial
meadows are obtained by making operations undefined for certain
arguments.

Let $\cM_i$ be an inversive meadow.
Then it makes sense to construct one partial inversive meadow from
$\cM_i$:
\begin{itemize}
\item
$0\minv \punch \cM_i$ is the partial algebra that is obtained from
$\cM_i$ by making $0\minv$ undefined.
\end{itemize}
Let $\cM_d$ be a divisive meadow.
Then it makes sense to construct two partial divisive meadows from
$\cM_d$:
\begin{itemize}
\item
$\Quant \mdiv 0 \punch \cM_d$ is the partial algebra that is obtained
from $\cM_d$ by making\linebreak[2] $q \mdiv 0$ undefined for all $q$ in
the domain of $\cM_d$;
\item
$(\Quant \diff \set{0}) \mdiv 0 \punch \cM_d$ is the partial algebra
that is obtained from $\cM_d$ by making $q \mdiv 0$ undefined for all
$q$ in the domain of $\cM_d$ different from $0$.
\end{itemize}

Clearly, the partial meadow constructions are special cases of a more
general partial algebra construction for which we have coined the term
\emph{punching}.
Presenting the details of the general construction is outside the scope
of the current paper.

Let $\cM_i$ be an inversive meadow and let $\cM_d$ be an divisive
meadow.
It happens that the projection $\imndmn$ recovers $0\minv \punch \cM_i$
from $\Quant \mdiv 0 \punch \cM_d$ as well as
$(\Quant \diff \set{0}) \mdiv 0 \punch \cM_d$, the projection $\dmnimn$
recovers $\Quant \mdiv 0 \punch \cM_d$ from $0\minv \punch \cM_i$, and
the projection $\dmnimn$ does not recover
$(\Quant \diff \set{0}) \mdiv 0 \punch \cM_d$ from
$0\minv \punch \cM_i$:
\begin{itemize}
\item
$0\minv$ is undefined in $0\minv \punch \cM_i$,
$\imndmn(0\minv) = 1 \mdiv 0$, and $1 \mdiv 0$ is undefined in
$\Quant \mdiv 0 \punch \cM_d$ and
$(\Quant \diff \set{0}) \mdiv 0 \punch \cM_d$;
\item
$x \mdiv 0$ is undefined in $\Quant \mdiv 0 \punch \cM_d$,
$\dmnimn(x \mdiv 0) = x \mmul (0\minv)$, and $x \mmul (0\minv)$ is
undefined in $0\minv \punch \cM_i$;
\item
$0 \mdiv 0 = 0$ in $(\Quant \diff \set{0}) \mdiv 0 \punch \cM_d$,
$\dmnimn(0 \mdiv 0) = 0 \mmul (0\minv)$, but $0 \mmul (0\minv)$ is
undefined in $0\minv \punch \cM_i$.
\end{itemize}
This uncovers that $(\Quant \diff \set{0}) \mdiv 0 \punch \cM_d$
expresses a view on the partiality of division by zero that cannot be
expressed if only multiplicative inverse is available.
Therefore, we take divisive meadows for more basic than inversive
meadows if their partial variants are considered as well.
Otherwise, we might take inversive meadows for more basic instead, e.g.\
because of supposed notational simplicity
(see Section~\ref{sect-inv-div-meadows}).
Thus, the move from a total algebra to a partial algebra may imply a
reversal of the preferred direction of projection from $\dmnimn$ to
$\imndmn$.
This shows that projection semantics is a tool within a setting: if the
setting changes, the tool, or rather its way of application, changes as
well.

Returning to $(\Quant \diff \set{0}) \mdiv 0 \punch \cM_d$, the question
remains whether the equation $0 \mdiv 0 = 0$ is natural.
The total cost $C_n$ of producing $n$ items of some product is often
viewed as the sum of a fixed cost $\nm{FC}$ and a variable cost
$\nm{VC}_n$.
Moreover, for $n \geq 1$, the variable cost $\nm{VC}_n$ of producing
$n$ items is usually viewed as $n$ times the marginal cost per item,
taking $\nm{VC}_n \mdiv n$ as the marginal cost per item.
For $n = 0$, the variable cost of producing $n$ items and the marginal
cost per item are both $0$.
This makes the equation $\nm{VC}_0 \mdiv 0 = 0$ natural.

The partial meadows that we are most interested in are the three partial
meadows of rational numbers that can be obtained from $\Ratzi$ and
$\Ratzd$ by means of the partial meadow constructions introduced above:
\begin{ldispl}
\begin{eqncol}
0\minv \punch \Ratzi\;,
\qquad
\Quant \mdiv 0 \punch \Ratzd\;,
\qquad
(\Quant \diff \set{0}) \mdiv 0 \punch \Ratzd\;.
\end{eqncol}
\end{ldispl}

Notice that these partial algebras have been obtained by means of the
well-known initial algebra construction and a straightforward partial
algebra construction.
This implies that only equational logic for total algebras has been used
as a logical tool for their construction, like in case of $\Ratzi$ and
$\Ratzd$.
The approach followed here contrasts with the usual approach where a
special logic for partial algebras would be used for the construction of
partial algebras (see e.g.~\cite{CMR99a}).

We believe that many complications and unclarities in the development of
the theories of the partial algebras are avoided by not using some logic
of partial functions as a logical tool for their construction.
Having constructed $0\minv \punch \Ratzi$ in the way described above,
the question whether it satisfies the equation $0\minv = 0\minv$ and
related questions are still open because the logic of partial functions
to be used when working with $0\minv \punch \Ratzi$ has not been fixed
yet.
This means that it is still a matter of design which logic of partial
functions will be used when working with this partial algebra.%
\footnote
{A relevant survey and discussion of logics of partial functions can be
 found in Sections~7--9 of~\cite{BM09g}.
 The rest of that paper is fully included in the current paper.
}
As soon as the logic is fixed, the above-mentioned questions are no
longer open: it is anchored in the logic whether $0\minv = 0\minv$ is
satisfied, $0\minv \neq 0\minv$ is satisfied, or neither of the two is
satisfied.
Similar remarks apply to the other two partial algebras introduced
above.

Many people prefer $0\minv \punch \Ratzi$ to any other inversive algebra
of rational numbers.
It is likely that this is because $x \mmul x\minv = 1$ serves as an
implicit definition of ${}\minv$ in  $0\minv \punch \Ratzi$.

From the partial meadows of rational numbers introduced above,
$0\minv \punch \Ratzi$ and $\Quant \mdiv 0 \punch \Ratzd$ correspond
most closely to the prevailing viewpoint on the status of $1 \mdiv 0$ in
theoretical computer science that is mentioned in
Section~\ref{sect-viewpoints-div-by-zero}.
In the sequel, we will focus on $\Quant \mdiv 0 \punch \Ratzd$ because
the divisive notation is used more often than the inversive notation.

\section{Partial Arithmetical Meadows with Zero}
\label{sect-partial-arith-meadows}

In this section, we introduce simple constructions of partial inversive
and divisive arithmetical meadows with zero from total ones.
The constructions in question are variants of the constructions of
partial inversive and divisive meadows introduced in
Section~\ref{sect-partial-meadows}.

Let $\Mdiaz$ be an inversive arithmetical meadow with zero.
Then it makes sense to construct one partial inversive arithmetical
meadow with zero from $\Mdiaz$:
\begin{itemize}
\item
$0\minv \punch \Mdiaz$ is the partial algebra that is obtained from
$\Mdiaz$ by making $0\minv$ undefined.
\end{itemize}
Let $\Mddaz$ be a divisive arithmetical meadow with zero.
Then it makes sense to construct two partial divisive arithmetical
meadows with zero from $\Mddaz$:
\begin{itemize}
\item
$\Quant \mdiv 0 \punch \Mddaz$ is the partial algebra that is obtained
from $\Mddaz$ by making $q \mdiv 0$ undefined for all $q$ in the domain
of $\Mddaz$;
\item
$(\Quant \diff \set{0}) \mdiv 0 \punch \Mddaz$ is the partial algebra
that is obtained from $\Mddaz$ by making $q \mdiv 0$ undefined for all
$q$ in the domain of $\Mddaz$ different from $0$.
\end{itemize}

The following partial arithmetical meadows of rational numbers with zero
can be obtained from $\Ratzi$ and $\Ratzd$ by means of the partial
meadow constructions introduced above:
\pagebreak[2]
\begin{ldispl}
\begin{eqncol}
0\minv \punch \Ratiaz\;,
\qquad
\Quant \mdiv 0 \punch \Ratdaz\;,
\qquad
(\Quant \diff \set{0}) \mdiv 0 \punch \Ratdaz\;.
\end{eqncol}
\end{ldispl}

At first sight, the absence of the additive inverse operator does not
seem to add anything new to the treatment of partial meadows
in Section~\ref{sect-partial-meadows}.
However, this is not quite the case.
Consider $0\minv \punch \Ratiaz$.
In the case of this algebra, there is a useful syntactic criterion for
``being defined''.
The set $\Def$ of defined terms and the auxiliary set $\Nz$ of non-zero
terms can be inductively defined by:
\begin{itemize}
\item
$1 \in \Nz$;
\item
if $x \in \Nz$, then $x + y \in \Nz$ and $y + x \in \Nz$;
\item
if $x \in \Nz$ and $y \in \Nz$, then $x \mmul y \in \Nz$;
\item
if $x \in \Nz$, then $x\minv \in \Nz$;
\item
$0 \in \Def$;
\item
if $x \in \Nz$, then $x \in \Def$;
\item
if $x \in \Def$ and $y \in \Def$, then $x + y \in \Def$ and
$x \mmul y \in \Def$.
\end{itemize}
This indicates that the absence of the additive inverse operator allows
a typing based solution to problems related to ``division by zero'' in
elementary school mathematics.
So there may be a point in dealing first and thoroughly with
non-negative rational numbers in a setting where division by zero is not
defined.

Working in $\Ratia$ simplifies matters even more because there is no
distinction between terms and defined terms.
Again, this may be of use in the teaching of mathematics at elementary
school.

\section{Imperative Meadows of Rational Numbers}
\label{sect-imperative-meadows}

In this section, we introduce imperative inversive and divisive meadows
of rational numbers.

An imperative meadow of rational numbers is a meadow of rational numbers
together with an imperative to comply with a very strong convention with
regard to the use of the multiplicative inverse or division operator.

Like with the partial meadows of rational numbers, we introduce three
imperative meadows of rational numbers:
\begin{itemize}
\item
$0\minv \imper \Ratzi$ is $\Ratzi$ together with the imperative to
comply with the convention that $q\minv$ is not used with $q = 0$;
\item
$\Quant \mdiv 0 \imper \Ratzd$ is $\Ratzd$ together with the
imperative to comply with the convention that $p \mdiv q$ is not used
with $q = 0$;
\item
$(\Quant \diff \set{0}) \mdiv 0 \imper \Ratzd$ is $\Ratzd$ together
with the imperative to comply with the convention that $p \mdiv q$ is
not used with $q = 0$ if $p \neq 0$.
\end{itemize}
The conventions are called the \emph{relevant inversive convention},
the \emph{relevant division convention} and
the \emph{liberal relevant division convention}, respectively.

The conventions are very strong in the settings in which they must be
complied with.
For example, the relevant division convention is not complied with if
the question ``what is $1 \mdiv 0$'' is posed.
Using $1 \mdiv 0$ is disallowed, although we know that $1 \mdiv 0 = 0$
in~$\Ratzd$.

The first two of the imperative meadows of rational numbers introduced
above correspond most closely to the second of the two prevailing
viewpoints on the status of $1 \mdiv 0$ in mathematics that are
mentioned in Section~\ref{sect-viewpoints-div-by-zero}.
In the sequel, we will focus on $\Quant \mdiv 0 \imper \Ratzd$ because
the divisive notation is used more often than the inversive notation.

\section{Discussion on the Relevant Division Convention}
\label{sect-rel-div-conv}

In this section, we discuss the relevant division convention, i.e.\ the
convention that plays a prominent part in imperative meadows.

The existence of the relevant division convention can be explained by
assuming a context in which two phases are distinguished: a definition
phase and a working phase.
A mathematician experiences these phases in this order.
In the definition phase, the status of $1 \mdiv 0$ is dealt with
thoroughly so as to do away with the necessity of reflection upon it
later on.
As a result, $\Ratzd$ and the relevant division convention come up.
In the working phase, $\Ratzd$ is simply used in compliance with the
relevant division convention when producing mathematical texts.
Questions relating to $1 \mdiv 0$ are understood as being part of the
definition phase, and thus taken out of mathematical practice.
This corresponds to a large extent with how mathematicians work.

In the two phase context outlined above, the definition phase can be
made formal and logical whereas the results of this can be kept out of
the working phase.
Indeed, in mathematical practice, we find a world where logic does not
apply and where validity of work is not determined by the intricate
details of a very specific formal definition but rather by the consensus
obtained by a group of readers and writers.

Whether a mathematical text, including definitions, questions, answers,
conjectures and proofs, complies with the relevant division convention
is a judgement that depends on the mathematical knowledge of the reader
and writer.
For example, $\Forall{x}{(x^2 + 1) \mdiv (x^2 + 1) = 1}$ complies with
the relevant division convention because the reader and writer of it
both know that $\Forall{x}{x^2 + 1 \neq 0}$.

Whether a mathematical text complies with the relevant division
convention may be judged differently even with sufficient mathematical
knowledge.
This is illustrated by the following mathematical text, where $>$ is the
usual ordering on the set of rational numbers:
\begin{quote}
\textbf{Theorem.}\,\,
If $p \mdiv q = 7$ then
$\displaystyle \frac{q^2 + p \mdiv q - 7}{q^4 + 1} > 0$.

\textit{Proof.}\,\,
Because $q^4 + 1 > 0$, it is sufficient to show that
$q^2 + p \mdiv q - 7 > 0$.
It follows from $p \mdiv q = 7$ that $q^2 + p \mdiv q - 7 = q^2$,
and $q^2 > 0$ because $q \neq 0$ (as $p \mdiv q = 7$). \qed
\end{quote}
Reading from left to right, it cannot be that first $p \mdiv q$ is used
while knowing that $q \neq 0$ and that later on $q \neq 0$ is inferred
from the earlier use of $p \mdiv q$.
However, it might be said that the first occurrence of the text fragment
$p \mdiv q = 7$ introduces the knowledge
that $q \neq 0$ at the right time, i.e.\ only after it has been entirely
read.

The possibility of different judgements with sufficient mathematical
knowledge looks to be attributable to the lack of a structure theory of
mathematical text.
However, with a formal structure theory of mathematical text, we still
have to deal with the fact that compliance with the relevant division
convention is undecidable.

The imperative to comply with the relevant division conventions boils
down to the disallowance of the use of $1 \mdiv 0$,
$1 \mdiv (1 + (-1))$, etcetera in mathematical text.
The usual explanation for this is the non-existence of a $z$ such that
$0 \mmul z = 1$.
This makes the legality of $1 \mdiv 0$ comparable to the legality of
$\sum_{m = 1}^\infty 1 \mdiv m$, because of the non-existence of the
limit of $\tup{\sum_{m = 1}^{n+1} 1 \mdiv m}_{n \in \Nat}$.
However, a mathematical text may contain the statement
``$\sum_{m = 1}^\infty 1 \mdiv m$ is divergent''.
That is, the use of $\sum_{m = 1}^\infty 1 \mdiv m$ is not disallowed.
So the fact that there is no rational number that mathematicians intend
to denote by an expression does not always lead to the disallowance of
its use.

In the case of $1 \mdiv 0$, there is no rational number that
mathematicians intend to denote by $1 \mdiv 0$, there is no real number
that mathematicians intend to denote by $1 \mdiv 0$, there is no complex
number that mathematicians intend to denote by $1 \mdiv 0$, etcetera.
A slightly different situation arises with $\sqrt{2}$: there is no
rational number that mathematicians intend to denote by $\sqrt{2}$, but
there is a real number that mathematicians intend to denote by
$\sqrt{2}$.
It is plausible that the relevant division convention has emerged
because there is no well-known extension of the field of rational
numbers with a number that mathematicians intend to denote by
$1 \mdiv 0$.

\section{Partial Meadows and Logics of Partial Functions}
\label{sect-inadequacy-LPF}

In this section, we adduce arguments in support of the statement that
partial meadows together with logics of partial functions do not quite
explain how mathematicians deal with $1 \mdiv 0$ in mathematical works.
It needs no explaining that a real proof of this statement is out of the
question.
However, we do not preclude the possibility that more solid arguments
exist.
Moreover, as it stands, it is possible that our argumentation leaves
room for controversy.

In the setting of a logic of partial functions, there may be terms whose
value is undefined.
Such terms are called non-denoting terms.
Moreover, often three truth values, corresponding to true, false and
neither-true-nor-false, are considered.
These truth values are denoted by $\True$, $\False$, and $\Undef$,
respectively.

In logics of partial functions, three different kinds of equality are
found (see e.g.~\cite{MR91a}).
They only differ in their treatment of non-denoting terms:
\begin{itemize}
\item
\emph{weak equality}: if either $t$ or $t'$ is non-denoting, then the
truth value of $t = t'$ is $\Undef$;
\item
\emph{strong equality}: if either $t$ or $t'$ is non-denoting, then the
truth value of $t = t'$ is $\True$ whenever both $t$ and $t'$ are
non-denoting and $\False$ otherwise;
\item
\emph{existential equality}: if either $t$ or $t'$ is non-denoting, then
the truth value of $t = t'$ is $\False$.
\end{itemize}

With strong equality, the truth value of $1 \mdiv 0 = 1 \mdiv 0 + 1$ is
$\True$.
This does not at all fit in with mathematical practice.
With existential equality, the truth value of $1 \mdiv 0 = 1 \mdiv 0$ is
$\False$.
This does not at all fit in with mathematical practice as well.
Weak equality is close to mathematical practice: the truth value of an
equation is neither $\True$ nor $\False$ if a term of the form
$p \mdiv q$ with $q = 0$ occurs in it.

This means that the classical logical connectives and quantifiers must
be extended to the three-valued case.
Many ways of extending them must be considered uninteresting for a logic
of partial functions because they lack an interpretation of the third
truth value that fits in with its origin: dealing with non-denoting
terms.
If those ways are excluded, only four ways to extend the classical
logical connectives to the three-valued case remain
(see e.g.~\cite{BBR95a}).
Three of them are well-known: they lead to Bochvar's strict
connectives~\cite{Boc39a}, McCarthy's sequential
connectives~\cite{McC63a}, and Kleene's monotonic
connectives~\cite{Kle38a}.
The fourth way leads to McCarthy's sequential connectives with the role
of the operands of the binary connectives reversed.

In mathematical practice, the truth value of
$\Forall{x}{x \neq 0 \Implies x \mdiv x = 1}$ is considered $\True$.
Therefore, the truth value of $0 \neq 0 \Implies 0 \mdiv 0 = 1$ is
$\True$ as well.
With Bochvar's connectives, the truth value of this formula is $\Undef$.
With McCarthy's or Kleene's connectives the truth value of this formula
is $\True$.
However, unlike with Kleene's connectives, the truth value of the
seemingly equivalent $0 \mdiv 0 = 1 \Or 0 = 0$ is $\Undef$ with
McCarthy's connectives.
Because this agrees with mathematical practice, McCarthy's connectives
are closest to mathematical practice.

The conjunction and disjunction connectives of Bochvar and the
conjunction and disjunction connectives of Kleene have natural
generalizations to quantifiers, which are called Bochvar's quantifiers
and Kleene's quantifiers, respectively.
Both Bochvar's quantifiers and Kleene's quantifiers can be considered
generalizations of the conjunction and disjunction connectives of
McCarthy.%
\footnote
{In~\cite{KTB91a}, Bochvar's quantifiers are called McCarthy's
 quantifiers, but McCarthy combines his connectives with Kleene's
 quantifiers (see e.g.~\cite{Kle38a}).}

With Kleene's quantifiers, the truth value of
$\Forall{x}{x \mdiv x = 1}$ is $\Undef$ and the truth value of
$\Exists{x}{x \mdiv x = 1}$ is $\True$.
The latter does not at all fit in with mathematical practice.
Bochvar's quantifiers are close to mathematical practice: the truth
value of a quantified formula is neither $\True$ nor $\False$ if it
contains a term of the form $p \mdiv q$ where $q$ has a closed
substitution instance $q'$ with $q' = 0$.

What precedes suggest that mathematical practice is best approximated by
a logic of partial functions with weak equality, McCarthy's connectives
and Bochvar's quantifiers.
We call this logic the \emph{logic of partial meadows}, abbreviated
$\LPMd$.

In order to explain how mathematicians deal with $1 \mdiv 0$ in
mathematical works, we still need the convention that a sentence is not
used if its truth value is neither $\True$ nor $\False$.
We call this convention the \emph{two-valued logic convention}.

$\LPMd$ together with the imperative to comply with the two-valued logic
convention gets us quite far in explaining how mathematicians deal with
$1 \mdiv 0$ in mathematical works.
However, in this setting, not only the truth value of
$0 \neq 0 \Implies 0 \mdiv 0 = 1$ is $\True$, but also the truth value
of $0 = 0 \Or 0 \mdiv 0 = 1$ is $\True$.\linebreak[2]
In our view, the latter does not fit in with how mathematicians deal
with $1 \mdiv 0$ in mathematical works.
Hence, we conclude that $\LPMd$, even together with the imperative to
comply with the two-valued logic convention, fails to provide a
convincing account of how mathematicians deal with $1 \mdiv 0$ in
mathematical works.

\section{Concluding Remarks}
\label{sect-conclusions}

We have made a formal distinction between inversive meadows and divisive
meadows.
We have given finite equational specifications of the class of all
inversive meadows, the class of all divisive meadows, and arithmetical
variants of them.
We have also given finite equational specifications whose initial
algebras are inversive meadows of rational numbers, divisive meadows of
rational numbers, and arithmetical variants of them.
We have introduced and discussed constructions of variants of inversive
meadows, divisive meadows, and arithmetical variants of them with a
partial multiplicative inverse or division operation from the total
ones.
Moreover, we have given an account on how mathematicians deal with
$1 \mdiv 0$ in mathematical work, using the concept of an imperative
meadow, and have made plausible that a convincing account of how
mathematicians deal with $1 \mdiv 0$ by means of some logic of partial
functions is not attainable.

We have obtained various algebras of rational numbers by means of the
well-known initial algebra construction and, in some cases, the
above-mentioned partial algebra constructions.
This implies that in all cases only equational logic for total algebras
has been used as a logical tool for their construction.
In this way, we have avoided choosing or developing an appropriate
logic, which we consider a design problem of logics, not of data types.
We claim that, viewed from the theory of abstract data types, the way in
which partial algebras are constructed in this paper is the preferred
way.
Its main advantage is that no decision need to be taken in the course of
the construction about matters concerning the logic to be used when
working with the partial algebras in question.
For that reason, we consider it useful to generalize the partial algebra
constructions on inversive and divisive meadows to a partial algebra
construction that can be applied to any total algebra.

Our account on how mathematicians deal with $1 \mdiv 0$ in mathematical
work makes use of the concept of an imperative meadow.
This concept is a special case of the more general concept of an
imperative algebra, i.e.\ an algebra together with the imperative to
comply with one or more conventions about its use.
An example of an imperative algebra is imperative stack algebra: stack
algebra, whose signature consists of $\nm{empty}$, $\nm{push}$,
$\nm{pop}$ and $\nm{top}$, together with the imperative to comply with
the convention that $\nm{top}(s)$ is not used with $s = \nm{empty}$.
In~\cite{BM09k}, this idea is successfully used in work on the
autosolvability requirement inherent in Turing's result regarding the
undecidability of the halting problem.

We have argued that a logic of partial functions with weak equality,
McCarthy's connectives and Bochvar's quantifiers, together with the
imperative to comply with the convention that sentences whose truth
value is neither $\True$ nor $\False$ are not used, approximates
mathematical practice best, but after all fails to provide a convincing
account of how mathematicians deal with $1 \mdiv 0$ in mathematical
works.
To our knowledge, there are no published elaborations on such a logic of
partial functions.
In most logics of partial functions that have been proposed by computer
scientists, including PPC~\cite{Hoo77a}, LPF~\cite{BCJ84a},
PFOL~\cite{GL90a} and WS~\cite{Owe93a}, weak equality, Kleene's
connectives and Kleene's quantifiers are taken as basic.

The axioms of an inversive meadow forces that the equation $0\minv = 0$
holds.
It happens that this equation is used for technical convenience in
several other places, see e.g.~\cite{Hod93a,Har98a}.
The axioms of a divisive meadow forces that the equation $x \mdiv 0 = 0$
holds.
One of the few published pieces of writing about this equation that we
have been able to trace is~\cite{McD76a}.

We have answered a number of questions about arithmetical meadows of
rational numbers, and stated a number of outstanding questions about
them.
We remark that the name arithmetical algebra is not always used in the
same way as Peacock~\cite{Pea1830a} used it.
It is sometimes difficult to establish whether the notion in question is
related to Peacock's notion of arithmetical algebra.
For example, it is not clear to us whether the notion of arithmetical
algebra defined in~\cite{Pix72a} is related to Peacock's notion of
arithmetical algebra.

The theory of meadows has among other things been applied
in~\cite{BPZ07a,BB09b}.

\appendix

\section{Modular Specification of Divisive Meadows}
\label{sect-module-algebra}

In this section, we give a modular specification of divisive meadows
using basic module algebra~\cite{BHK88a}.

\BMAfol\ (Basic Module Algebra for \emph{f}irst-\emph{o}rder
\emph{l}ogic specifications) is a many-sorted equational theory of
modules which covers the concepts on which the key modularization
mechanisms found in existing specification formalisms are based.
The signature of \BMAfol\ includes among other things:
\begin{itemize}
\item
the sorts $\AtSig$ of \emph{atomic signatures},
$\Ren$ of \emph{atomic renamings},
$\Sig$ of \emph{signatures}, and
$\Mod$ of \emph{modules};
\item
the binary \emph{deletion} operator
$\funct{\delete}{\AtSig \x \Sig}{\Sig}$;
\item
the unary \emph{signature} operator $\funct{\sig}{\Mod}{\Sig}$;
\item
for each first-order sentence $\phi$ over some signature,
the constant $\const{\sen{\phi}}{\Mod}$;
\item
the binary \emph{renaming application} operator
$\funct{\rename}{\Ren \x \Mod}{\Mod}$;
\item
the binary \emph{combination} operator
$\funct{\combin}{\Mod \x \Mod}{\Mod}$;
\item
the binary \emph{export} operator
$\funct{\export}{\Sig \x \Mod}{\Mod}$.
\end{itemize}
The axioms of \BMAfol\ as well as four different models for \BMAfol\ can
be found in~\cite{BHK88a}.
A useful derived operator is the \emph{hiding} operator
$\funct{\hiding}{\AtSig \x \Mod}{\Mod}$ defined by
$a \hiding X = (a \delete \sig(X)) \export X$.
Below, we will use the notational conventions introduced in Section~3.5
of~\cite{BHK88a}.

Let $\nm{Md_i}$ be the closed module expression corresponding to the
equations $\eqnsimd$, i.e.\
$\nm{Md_i} =
 \sen{(x + y) + z = x + (y + z)} \combin \cdots \combin
 \sen{x \mmul (x \mmul x\minv) = x}$.
We give a modular specification of divisive meadows using \BMAfol\ as
follows:
\begin{ldispl}
\nm{Md_d} =
\funcd{{\minv}}{\Quant \to \Quant} \hiding
(\nm{Md_i} \combin \sen{x \mdiv y = x \mmul (y\minv)})\;.
\end{ldispl}
In~\cite{BHK88a}, a semantic mapping $\nm{EqTh}$ is defined that gives,
for each closed module expression, its equational theory.
We have the following theorem:
\begin{theorem}
\label{theorem-module-algebra-1}
$\nm{EqTh}(\nm{Md_d})$ is the equational theory associated with the
equational specification of divisive meadows given in
Section~\ref{sect-inv-div-meadows}.
\end{theorem}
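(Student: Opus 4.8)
The plan is to unfold the semantics of the module expression $\nm{Md_d}$ into a purely syntactic statement about derivability, and then to establish that statement in two directions, the harder of which is handled by a projection extending $\imndmn$. By the semantics of the combination operator $\combin$ and of the export operator underlying $\hiding$ (as given in~\cite{BHK88a}), $\nm{EqTh}(\nm{Md_d})$ consists of exactly those $\sigdmd$-equations $p = q$ for which $\eqnsimd \union \set{x \mdiv y = x \mmul (y\minv)} \deriv p = q$: the combination $\nm{Md_i} \combin \sen{x \mdiv y = x \mmul (y\minv)}$ denotes the equational theory generated by $\eqnsimd \union \set{x \mdiv y = x \mmul (y\minv)}$ over the signature $\sigcr \union \set{\minv,\mdiv}$, and hiding the operator $\minv$ exports precisely the equations of that theory lying within $\sigdmd$. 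Hence it suffices to prove, for all $\sigdmd$-terms $p$ and $q$, that $\eqnsimd \union \set{x \mdiv y = x \mmul (y\minv)} \deriv p = q$ iff $\eqnsdmd \deriv p = q$.

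For the implication from right to left I would check that every equation of $\eqnsdmd$ is derivable from $\eqnsimd \union \set{x \mdiv y = x \mmul (y\minv)}$. The commutative ring equations $\eqnscr$ are common to both, so only the three equations of $\eqnsdiv$ need attention. From $x \mdiv y = x \mmul (y\minv)$ one first derives the identity $1 \mdiv x = x\minv$; then $x \mdiv y = x \mmul (1 \mdiv y)$ is immediate, $1 \mdiv (1 \mdiv x) = x$ reduces to the reflexivity equation, and $(x \mmul x) \mdiv x = x$ reduces to the restricted inverse equation. These derivations are routine.

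The implication from left to right is the main obstacle, since a derivation of a $\sigdmd$-equation from $\eqnsimd \union \set{x \mdiv y = x \mmul (y\minv)}$ may pass through intermediate terms in which $\minv$ occurs, and these must be eliminated. To this end I would introduce the projection $\pi$ from terms over $\sigcr \union \set{\minv,\mdiv}$ to terms over $\sigdmd$ that acts as the identity on the $\sigcr$- and $\mdiv$-structure and puts $\pi(t\minv) = 1 \mdiv \pi(t)$, so that $\pi$ extends $\imndmn$ and restricts to the identity on $\sigdmd$-terms. The key step is to verify that $\pi$ sends each axiom of $\eqnsimd \union \set{x \mdiv y = x \mmul (y\minv)}$ to an equation derivable from $\eqnsdmd$: the ring axioms are fixed by $\pi$; the image of the reflexivity equation is the first equation of $\eqnsdiv$; the image of the restricted inverse equation is $x \mmul (x \mmul (1 \mdiv x)) = x$, which follows from the second and third equations of $\eqnsdiv$ together with associativity; and the image of $x \mdiv y = x \mmul (y\minv)$ is exactly the third equation of $\eqnsdiv$. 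Since $\pi$ commutes with substitution and with the congruence rules of equational logic, a straightforward induction on the length of a derivation then yields that $\eqnsimd \union \set{x \mdiv y = x \mmul (y\minv)} \deriv p = q$ implies $\eqnsdmd \deriv \pi(p) = \pi(q)$, and for $\sigdmd$-terms $p$ and $q$ we have $\pi(p) = p$ and $\pi(q) = q$, which gives the claim. This is the same interpretation argument already carried out for $\imndmn$ in Section~\ref{sect-inv-div-meadows}, now extended to accommodate the auxiliary equation $x \mdiv y = x \mmul (y\minv)$ that introduces $\mdiv$.
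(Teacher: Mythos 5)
Your proposal is correct, but it argues at a different level than the paper. The paper's proof stays model-theoretic: it invokes the semantic mapping $\nm{Mod}$, notes that $\nm{EqTh}(m)$ is by definition the equational theory of $\nm{Mod}(m)$, and then shows that $\nm{Mod}(\nm{Md_d})$ is exactly the class of divisive meadows by proving (i) that the $\sigdmd$-reduct of any model of $\eqnsimd \union \set{x \mdiv y = x \mmul (y\minv)}$ satisfies $\eqnsdmd$, and (ii) that any divisive meadow expands to such a model by setting $x\minv = 1 \mdiv x$. You instead pass immediately (via Birkhoff completeness, which you leave implicit when you identify $\nm{EqTh}(\nm{Md_d})$ with the set of derivable $\sigdmd$-equations) to a purely proof-theoretic statement, and handle the hard direction by the projection $\pi$ together with an induction on derivations. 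The two routes verify literally the same finite list of equational facts --- your images $1 \mdiv (1 \mdiv x) = x$ and $x \mmul (x \mmul (1 \mdiv x)) = x$ of the inversive axioms under $\pi$ are exactly the equations the paper checks in the expansion of clause (ii), and your right-to-left direction is clause (i) --- but they package them differently. The paper's expansion argument is shorter because it needs no induction on the length of derivations and it yields the stronger conclusion that the model classes themselves coincide; your interpretation argument is more self-contained on the syntactic side and makes explicit why a derivation that detours through terms containing $\minv$ causes no harm, which the semantic route disposes of silently via completeness. Both are sound proofs of the stated theorem.
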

\begin{proof}
In~\cite{BHK88a}, a semantic mapping $\nm{Mod}$ is defined that gives,
for each closed module expression, its model class.
$\nm{Mod}$ and $\nm{EqTh}$ are defined such that $\nm{EqTh}(m)$ is the
equational theory of $\nm{Mod}(m)$ for each closed module expression
$m$.
Hence, it is sufficient to show that $\nm{Mod}(\nm{Md_d})$ is the class
of models of the equational specification of divisive meadows.
By the definition of $\nm{Mod}$, we have to show that:
(i)~the reduct to the signature of divisive meadows of each model of the
equational specification of inversive meadows extended with the equation
$x \mdiv y = x \mmul (y\minv)$ is a model of the equational
specification of divisive meadows;
(ii)~each model of the equational specification of divisive meadows can
be expanded with a multiplicative inverse operation satisfying
${(x\minv)}\minv = x$ and $x \mmul (x \mmul x\minv) = x$.
Using the equations from the equational specification of inversive
meadows and the equation $x \mdiv y = x \mmul (y\minv)$, it can easily
be proved by equational reasoning that all equations from the equational
specification of divisive meadows are satisfied by the reducts in
question.
Let ${}\minv$ be defined by $x\minv = 1 \mdiv x$.
Then, using the equations from the equational specification of divisive
meadows and the equation $x\minv = 1 \mdiv x$, it can easily
be proved by equational reasoning that the equations
${(x\minv)}\minv = x$ and $x \mmul (x \mmul x\minv) = x$ are satisfied
by the expansions in question.
\qed
\end{proof}

We give the following modular specification of reduced divisive meadows:
\begin{ldispl}
\begin{aeqns}
\nm{Md_{rd1}} & = &
\funcd{{\mmul}}{\Quant \x \Quant \to \Quant} \hiding \nm{Md_d}\;,
\\
\nm{Md_{rd2}} & = &
\funcd{-}{\Quant \to \Quant} \hiding
(\nm{Md_{rd1}} \combin \sen{x - y = x + (- y)})\;,
\\
\nm{Md_{rd3}} & = &
\funcd{{+}}{\Quant \x \Quant \to \Quant} \hiding \nm{Md_{rd2}}\;,
\\
\nm{Md_{rd}} & = &
\funcd{0}{\Quant} \hiding \nm{Md_{rd3}}\;.
\end{aeqns}
\end{ldispl}
We have the following theorem:
\begin{theorem}
\label{theorem-module-algebra-2}
$\nm{EqTh}(\nm{Md_{rd}})$ is the equational theory associated with the
equational specification of reduced divisive meadows given in
Section~\ref{sect-inv-div-meadows}.
\end{theorem}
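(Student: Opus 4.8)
The plan is to proceed exactly as in the proof of Theorem~\ref{theorem-module-algebra-1}. Recall from~\cite{BHK88a} that $\nm{Mod}$ and $\nm{EqTh}$ are defined so that $\nm{EqTh}(m)$ is the equational theory of $\nm{Mod}(m)$ for every closed module expression $m$. Hence it suffices to show that $\nm{Mod}(\nm{Md_{rd}})$ is the class of reduced divisive meadows, i.e.\ the class of $\set{1,{} - {},{} \mdiv {}}$-algebras satisfying the equations in Table~\ref{eqns-reduced-divisive-meadow}. First I would unfold the chain $\nm{Md_{rd1}},\ldots,\nm{Md_{rd}}$ using the definition of hiding ($a \hiding X = (a \delete \sig(X)) \export X$), of combination, and of the definitional sentence $\sen{x - y = x + (- y)}$, which is what introduces the binary subtraction operator that survives to the end. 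Starting from $\nm{Md_d}$, whose model class is the class of divisive meadows by Theorem~\ref{theorem-module-algebra-1}, the four hiding steps successively remove $\mmul$, the unary $-$, $+$, and $0$, each of which is explicitly definable over the operators that remain: $x \mmul y = x \mdiv (1 \mdiv y)$, $- x = (1 - 1) - x$, $x + y = x - ((1 - 1) - y)$, and $0 = 1 - 1$. Since these defining equations hold in every divisive meadow, $\nm{Mod}(\nm{Md_{rd}})$ is exactly the class of $\set{1,{} - {},{} \mdiv {}}$-reducts of divisive meadows.

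I would then prove the two inclusions. For $\subseteq$, I would check that each of the nine equations of Table~\ref{eqns-reduced-divisive-meadow}, read with $x - y$ as the abbreviation $x + (-y)$, is derivable from $\eqnsdmd$; this is routine because those equations are, step for step, the transcriptions into $\set{1,{} - {},{} \mdiv {}}$ of the commutative-ring axioms and of the divisive axiom $(x \mmul x) \mdiv x = x$, in the spirit of the projection $\imnrdmn$. It follows that every reduct in $\nm{Mod}(\nm{Md_{rd}})$ is a reduced divisive meadow.

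For $\supseteq$ I would take an arbitrary reduced divisive meadow $A$ and expand it to a $\sigdmd$-algebra by defining $0 := 1 - 1$, $- x := (1 - 1) - x$, $x + y := x - ((1 - 1) - y)$, and $x \mmul y := x \mdiv (1 \mdiv y)$, and verify that the result satisfies $\eqnsdmd = \eqnscr \union \eqnsdiv$ together with the definitional sentence $x - y = x + (-y)$, so that $A$ is the reduct of a divisive meadow and hence lies in $\nm{Mod}(\nm{Md_{rd}})$; the defining equations force the expansion, so it is in fact unique.

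The hard part will be this last verification, since several of the divisive axioms are not literal back-translations of Table~\ref{eqns-reduced-divisive-meadow} and must be derived from it. The key multiplicative facts are $1 \mdiv 1 = 1$ (instantiate $x \mdiv 1 = x$ at $x = 1$) and the reflexivity equation, obtained via the commutativity equation as $1 \mdiv (1 \mdiv x) = x \mdiv (1 \mdiv 1) = x \mdiv 1 = x$; once these are in place the multiplicative identity $x \mmul 1 = x$ and the division axiom $x \mdiv y = x \mmul (1 \mdiv y)$ follow immediately. On the additive side the pivotal fact is the double-negation law $(1 - 1) - ((1 - 1) - x) = x$, which follows in a few steps from the additive equations — by commutativity $(1 - 1) - ((1 - 1) - x) = x - ((1 - 1) - (1 - 1))$, then $(1 - 1) - (1 - 1) = 1 - 1$ by $x - x = 1 - 1$, and finally $x - (1 - 1) = x$ — and which yields $x + 0 = x$, $x + (-x) = 0$ and the definitional sentence $x - y = x + (-y)$ all at once. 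Assembling these finite but somewhat delicate equational derivations, and matching each remaining ring axiom to its transcription, is where the real work of the proof lies.
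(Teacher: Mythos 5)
Your proposal is correct and follows essentially the same route as the paper's own proof: reduce to showing that $\nm{Mod}(\nm{Md_{rd}})$ is the class of reduced divisive meadows, check one inclusion by transcribing the axioms along the hidden definitional equations, and obtain the other by expanding a reduced divisive meadow with exactly the definitions $0 = 1 - 1$, $x + y = x - ((1 - 1) - y)$, $x \mmul y = x \mdiv (1 \mdiv y)$, $- x = (1 - 1) - x$. The paper states only this expansion and refers back to Theorem~\ref{theorem-module-algebra-1}; your derivations of reflexivity and the double-negation law supply the details it leaves implicit.
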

\begin{proof}
The proof follows the same line as the proof of
Theorem~\ref{theorem-module-algebra-1}.
For the expansion, we define zero, addition, multiplication, and
additive inverse as follows: $0 = 1 - 1$, $x + y = x - ((1 - 1) - y)$,
$x \mmul y = x \mdiv (1 \mdiv y)$, and $- x = (1 - 1) - x$.
\qed
\end{proof}

\bibliographystyle{splncs03}
\bibliography{MD}


\end{document}